\newcommand{\macrocolor}{}
\newtheorem{theorem}{Theorem}
\newtheorem{corollary}[theorem]{Corollary}
\newtheorem{proposition}[theorem]{Proposition}
\newtheorem{lemma}[theorem]{Lemma}
\theoremstyle{definition}
\newtheorem{definition}[theorem]{Definition}
\theoremstyle{remark}
\newtheorem{remark}[theorem]{Remark}
\newtheorem{example}[theorem]{Example}
\newcommand{\StSKloc}{\widetilde{\textrm{SK}}_{\textrm{loc}}}
\newcommand{\SbasElocq}{\widetilde{\mathcal{E}}_{\mathrm{loc}}}
\newcommand{\basElocq}[1]{\SbasElocq(#1)}
\newcommand{\D}{{\macrocolor\mathrm{D}}}
\newcommand{\SK}[1]{{\macrocolor\mathrm{SK}(#1)}}
\newcommand{\SKnets}[1]{{\macrocolor\widetilde{\mathrm{SK}}(#1)}}
\newcommand{\SKloc}[1]{{\macrocolor\mathrm{SK}_{\mathrm{loc}}(#1)}}
\newcommand{\SKlocnets}[1]{{\macrocolor\widetilde{\mathrm{SK}}_{\mathrm{loc}}(#1)}}
\newcommand{\SSKlocnets}{{\macrocolor\widetilde{\mathrm{SK}}_{\mathrm{loc}}}}
\newcommand{\Cinfnets}[1]{{\macrocolor \widetilde{C^\infty}(#1)}}
\newcommand{\basE}[1]{{\macrocolor \mathcal{E}(#1)}}
\newcommand{\basEloc}[1]{{\macrocolor \mathcal{E}_{loc}(#1)}}
\newcommand{\basEploc}[1]{{\macrocolor \mathcal{E}_{ploc}(#1)}}
\newcommand{\basEpi}[1]{{\macrocolor \mathcal{E}_{pi}(#1)}}
\newcommand{\basL}[1]{{\macrocolor \mathcal{L}(#1)}}
\newcommand{\basLloc}[1]{{\macrocolor \mathcal{L}_{loc}(#1)}}
\newcommand{\basLploc}[1]{{\macrocolor \mathcal{L}_{ploc}(#1)}}
\newcommand{\basLpi}[1]{{\macrocolor \mathcal{L}_{pi}(#1)}}
\newcommand{\basLcinf}[1]{{\macrocolor \mathcal{L}_{C^\infty}(#1)}}
\newcommand{\SbasEloc}{{\macrocolor \mathcal{E}_{loc}}}
\newcommand{\SbasEploc}{{\macrocolor \mathcal{E}_{ploc}}}
\newcommand{\SbasEpi}{{\macrocolor \mathcal{E}_{pi}}}
\newcommand{\SbasLloc}{{\macrocolor \mathcal{L}_{loc}}}
\newcommand{\SbasLploc}{{\macrocolor \mathcal{L}_{ploc}}}
\newcommand{\Lsm}{\Lie}
\newcommand{\Lnf}{\Lie}
\newcommand{\Lsk}{\Lie^{\textrm{SK}}}
\newcommand{\LE}{\widehat{\Lie}}
\newcommand{\tLE}{\widetilde\Lie}
\providecommand{\snorm}[1]{\lVert#1\rVert}
\DeclareMathOperator{\Div}{div}
\DeclareMathOperator{\cs}{cs}
\newcommand{\ud}{\mathrm{d}}
\newcommand{\pd}{\partial}
\newcommand{\e}{\varepsilon}
\newcommand{\bC}{\mathbb{C}}
\newcommand{\bN}{\mathbb{N}}
\newcommand{\bR}{\mathbb{R}}
\newcommand{\R}{\bR}
\newcommand{\cA}{\mathcal{A}}
\newcommand{\cD}{\mathcal{D}}
\newcommand{\cE}{\mathcal{E}}
\newcommand{\cG}{\mathcal{G}}
\newcommand{\cL}{\mathcal{L}}
\newcommand{\cM}{\mathcal{M}}
\newcommand{\cN}{\mathcal{N}}
\newcommand{\cO}{\mathcal{O}}
\newcommand{\cS}{\mathcal{S}}
\newcommand{\cT}{\mathcal{T}}
\newcommand{\coleq}{\mathrel{\mathop:}=}
\DeclareMathOperator{\Fl}{Fl}
\DeclareMathOperator{\ev}{ev}
\DeclareMathOperator{\id}{id}
\DeclareMathOperator{\supp}{supp}
\newcommand{\Lie}{\mathrm{L}}
\providecommand{\norm}[1]{\left\lVert#1\right\rVert}
\providecommand{\abso}[1]{\left\lvert#1\right\rvert}
\begin{document}

\author{E. A. Nigsch\footnote{Fakultät für Mathematik, Universität Wien, Oskar-Morgenstern-Platz 1, 1090 Wien, Austria; e-mail: eduard.nigsch@univie.ac.at; phone: +43 1 4277 50636 }}
\title{The functional analytic foundation\\ of Colombeau algebras}
\maketitle

\begin{abstract}
Colombeau algebras constitute a convenient framework for performing nonlinear operations like multiplication on Schwartz distributions. Many variants and modifications of these algebras exist for various applications. We present a functional analytic approach placing these algebras in a unifying hierarchy, which clarifies their structural properties as well as their relation to each other.
\end{abstract}

\smallskip
\noindent \textbf{Keywords.} Colombeau algebra; nonlinear generalized function; smoothing kernel; smoothing operator; basic space; sheaf property

\section{Introduction}\label{sec_colalg}

The object of the present article is to present a functional analytic description of Colombeau algebras. We introduce a basic space (Definition \ref{basdef}) representing the fundamental idea behind Colombeau algebras, \emph{regularization of distributions}, in the most general way. This will enable us to realize these algebras as particular instances of a unifying structure.

By \emph{Colombeau algebras} one understands algebras of generalized functions introduced by J.~F.~Colombeau for the purpose of rigorously defining multiplication and other nonlinear operations on Schwartz distributions in a way suitable for applications in physics. In particular, these algebras contain the space of Schwartz distributions as a linear subspace and the algebra of smooth functions as a faithful subalgebra. This enables one to employ Colombeau algebras in many situations involving ambiguous products of distributions, resolving these ambiguities and allowing for the solution of many nonlinear problems which cannot be solved in classical distribution theory. For example, one can obtain existence and uniqueness results for large classes of nonlinear partial differential equations \cite{Biagioni, Rosinger, 0701.35042}. Furthermore, classical concepts of regularity theory and microlocal analysis can be formulated and studies as well in Colombeau algebras. We suppose the reader to have a certain familiarity with this area of research and refer to \cite{ColNew, colmult, ColElem, MOBook, GKOS} for detailed information; as for terminology and notation, we mainly follow \cite{GKOS}.

Alongside Colombeau's \emph{original} algebra $\cG^o$ the \emph{elementary} algebra $\cG^e$ and the \emph{simplified} (also called \emph{special}) algebra $\cG^s$ on open subsets of $\bR^n$ appeared \cite{ColNew,ColElem, 0601.35014}. In subsequent years many variants of these algebras emerged for various applications and in different contexts. One main line of research pursued the goal of establishing a \emph{geometric} theory of nonlinear generalized functions on manifolds, eventually aiming at nonlinear distributional geometry.

In a first step this involved the construction of the \emph{diffeomorphism invariant} local algebra $\cG^d$ \cite{found} on which the \emph{global} algebra $\hat\cG$ \cite{global} on manifolds is based, which in turn was the model for the space $\hat\cG^r_s$ of nonlinear generalized tensor fields presented in \cite{global2}.

In hindsight, the major hurdle in each step along this way was the appropriate choice of the \emph{basic space} (the space containing the representatives of generalized functions) and the \emph{testing procedure} (which determines whether a function belongs to the subsets of \emph{moderate} or \emph{negligible} functions, the quotient of which constitutes the desired algebra). This often was accomplished only after a long phase of trial and error, owing to the growing technical difficulties involved therein (cf.~the discussion in \cite[Section 2.1]{GKOS}). Consequently, these algebras have the character of formally unrelated ad-hoc extensions of the original ideas. By basing our approach on abstract \emph{smoothing operators (Section \ref{sec_smoothop})} we will place these algebras in a unifying hierarchy (Section \ref{sec_subalg}). Furthermore, requirements for the following key properties will be clarified in this setting:
\begin{enumerate}[($i$)]
  \item existence of the so-called $\sigma$-embedding of smooth functions, which allows one to split the smooth from the singular part in calculations (see Remark \ref{rem_baslo} \eqref{rem_sigma});
  \item sheaf properties (see Section \ref{sec_sheaf}); 
 \item invariance of the basic spaces under diffeomorphisms and derivatives (see Theorem \ref{thm_diffs});
 \item existence of a meaningful directional derivative which is $C^\infty$-linear in the directional vector field (see Section \ref{sec_deriv}).
\end{enumerate}

Points (i)--(iii) are more of a foundational nature, while (iv) will be crucial for eventually establishing a notion of covariant derivative in a full Colombeau algebra of nonlinear generalized tensor fields. In fact, the latter is not possible in $\hat\cG^r_s$, but our results will lead the way to accomplishing this objective.

It should be mentioned that the idea of regarding generalized functions as maps from the space of smoothing operators to the space of smooth functions was also used in \cite{1204.58021,1177.46033} in order to construct global algebras of generalized functions on compact manifolds.

Finally, we remark that while we only deal with the scalar case on open subsets of $\bR^n$ here, the same constructions can be done with minor modifications also for distributions taking values in a vector space or, in the context of manifolds, with values in a vector bundle.

\section{Notation}

$\bN = \{1,2,,\dotsc\}$ is the set of natural numbers, $\bN_0 = \{ 0 \} \cup \bN$, $\bR$ is the real number field and $\bC$ the field of complex numbers. $\Omega$ will be, in general, an open subset of $\bR^n$ for some $n$. $\ev_x$ denotes the evaluation map at a point $x$ and $\id_M$ (or simply $\id$) the identity on a set $M$.

$C^\infty(\Omega)$ and $\cD(\Omega)$ are the usual spaces of smooth functions and test functions of distribution theory, $\cD'(\Omega)$ the space of distributions on $\Omega$. We write the pairing of $\cD'(\Omega)$ and $\cD(\Omega)$ as $\langle u, \varphi \rangle$ for $u \in \cD'(\Omega)$ and $\varphi \in \cD(\Omega)$. $\cE'(\Omega)$ denotes the dual of $C^\infty(\Omega)$, $\delta \in \cE'(\Omega)$ is the delta functional and $\delta_x$ its translate by $x \in \bR^n$. Note that $\basE\Omega$ will \emph{not} denote the space $C^\infty(\Omega)$ as often done in distribution theory, but the basic space introduced in Definition \ref{basdef} -- an unfortunate but historically established clash of notations.

For any functor $F$ on the category of open subsets of $\bR^n$ (or smooth manifolds) with diffeomorphisms and a given diffeomorphism $\mu$ we denote the induced action $F(\mu)$ by $\mu_*$ and its inverse $F(\mu^{-1})$ by $\mu^*$, called  pushforward and pullback along $\mu$, respectively. In general, the equivalence class in a quotient will be denoted by square brackets $[\ldots]$.

For a multiindex $\alpha \in \bN_0^n$, $\pd_x^\alpha$ denotes the partial derivative of order $\alpha$ in the $x$-variable, where we may omit the $x$ in unambiguous cases. For a vector field $X \in C^\infty(\Omega, \bR^n)$ and $f \in C^\infty(\Omega)$ (or, more generally, $f \in C^\infty(\Omega, E)$ with $E$ any locally convex space), $\Lsm_X f = \sum_i X^i \pd_i f$ denotes the directional derivative of $f$ in the direction of $X$, where the $X^i$ are the coordinates of $X$; for $\varphi \in \cD(\Omega)$, $\Lnf_X \varphi$ is the $n$-form derivative $\Lnf_X \varphi \coleq \sum_i X^i \pd_i \varphi + \Div X \cdot \varphi$, where $\Div X$ is the divergence of $X$ (this is the local expression of the Lie derivative of $n$-forms on manifolds). The inclusion $\cD(\Omega) \subseteq C^\infty(\Omega)$ will cause no confusion here (see Remark \ref{rem_derivatives}). $\overline{B_r}(x)$ denotes the closed Euclidean ball of radius $r>0$ at $x \in \bR^n$.

For two locally convex spaces $E$ and $F$, $\cL(E,F)$ denotes the space of all continuous linear mappings from $E$ to $F$. Following \cite{Schaefer} we denote by $\cL_b(E,F)$ this space endowed with the topology of bounded convergence. $E \widehat\otimes F$ denotes the completed projective tensor product of $E$ and $F$; given two linear mappings $f$ on $E$ and $g$ on $F$, $f \hat \otimes g$ denotes the extension of their tensor product to $E \widehat\otimes F$. We denote by $\cs(E)$ the set of continuous seminorms on $E$.

Concerning calculus on infinite dimensional vector spaces, we employ the setting of convenient calculus of \cite{KM}. We recall its basic definitions: a mapping $f \colon E \to F$ between two locally convex space if called smooth if it maps smooth curves into $E$ to smooth curves into $F$, i.e., if $f \circ c \in C^\infty(\bR, F)$ for all $c \in C^\infty(\bR, E)$. Of particular importance is the exponential law, which states that $f \colon E_1 \times E_2 \to F$ is smooth if the canonically associated mapping $E_1 \to C^\infty(E_2, F)$ exists and is smooth. There exists a linear smooth differentiation operator $\ud \colon C^\infty(E, F) \to C^\infty(E, \cL_b(E,F))$ satisfying the chain rule. A curve is smooth into a projective limit if and only if all its components are smooth. Finally, $C^\infty(E,F)$ denotes the space of smooth functions from $E$ to $F$, which coincides with the usual notion for finite-dimensional spaces.

\section{Smoothing kernels}\label{sec_smoothop}

At the core of our approach lies the principle that Colombeau algebras always are based on representing distributions by smooth functions. In the most general way this is accomplished by the use of \emph{smoothing operators}
$\Phi \in \cL(\cD'(\Omega), C^\infty(\Omega))$.
By the Schwartz kernel theorem these correspond exactly to \emph{smoothing kernels}\footnote{Note that the `smoothing kernels' of \cite[Definition 3.3]{global} are (smoothly parametrized) nets of smoothing kernels in our terminology which satisfy some additional properties.}
$\vec \varphi \in C^\infty(\Omega, \cD(\Omega))$
and this correspondence is a topological isomorphism \cite[Theorem 44.1 and Proposition 50.4]{Treves}
\begin{equation}\label{SOSKiso}
\cL_b(\cD'(\Omega), C^\infty(\Omega)) \cong C^\infty(\Omega, \cD(\Omega))
\end{equation}
where the latter space carries its natural topology of uniform convergence on compact sets in all derivatives. Explicitly, the correspondence between a smoothing operator $\Phi$ and a smoothing kernel $\vec\varphi$ is realized in one direction by $\Phi(u) \coleq u \circ \vec\varphi$ for $u \in \cD'(\Omega)$ and in the other direction by $\vec \varphi(x) \coleq \Phi^{t}(\delta_x)$ for $x \in \Omega$, where $\Phi^{t}$ is the transpose of $\Phi$ \cite[Theor\'eme 3]{FDVV}. Following \cite{FDVV} we write $\langle u, \vec\varphi \rangle$ for the mapping $x \mapsto \langle u, \vec\varphi(x) \rangle$ in $C^\infty(\Omega)$.

The importance of these objects lies in the fact that moderateness and negligibility of generalized functions are in practice determined by evaluating them on particular nets or sequences of smoothing kernels, called \emph{test objects}. In the simplified algebra this is merely one fixed test object \cite[Equation (1.8)]{GKOS} and this evaluation is already incorporated in the embedding, while full Colombeau algebras employ a graded set of test objects for the test. For example, in the elementary algebra $\cG^e$ on $\bR^n$ these are scaled and translated test functions having integral one and a number of vanishing moments, in the diffeomorphism invariant algebras $\cG^d$ and $\hat\cG$ these are nets of smoothing kernels with appropriate asymptotic properties (\cite[Definition 3.3]{global}, \cite{gdnew}).

\section{The general basic space}

The \emph{basic space} has been realized in many forms, for example $C^\infty(\cD(\Omega))$, $C^\infty(\Omega)^{(0,1]}$ and $C^\infty(\cD(\Omega), C^\infty(\Omega))$ (cf.~\cite{colmult,ColElem,found,global,Jelinek,colmani}), not mentioning equivalent ones obtained by an application of the exponential law \cite[Theorem 3.12]{KM} or transformations between equivalent formalisms (cf.~\cite[Section 5]{found}).
\begin{remark}\label{keinanull}
Note that the original definitions of these basic spaces originally employed the space $\cA_0(\bR^n) \coleq \{ \varphi \in \cD(\bR^n): \int \varphi=1 \}$ instead of $\cD(\Omega)$. We remove this artificial restriction and use $\cD(\Omega)$ instead because this gives more natural definitions, allows for more flexibility in testing and still gives the same algebras as long as the same tests are used.
\end{remark}

In order to find an encompassing notion for these basic spaces we point out that ultimately, as far as embedded distributions and their products are concerned, in the algebras mentioned above the tests for moderateness and negligibility \emph{always} reduce to estimating $C^\infty(\Omega)$-seminorms of expressions of the form $\langle u, \vec\varphi \rangle \in C^\infty(\Omega)$ (or products thereof) where $u$ is a distribution and $\vec\varphi$ a smoothing kernel. More specifically, one considers the asymptotic behaviour of $\langle u, \vec\varphi \rangle$ when $\vec\varphi$ converges to $\vec\delta \in C^\infty(\Omega, \cE'(\Omega))$ defined by $\vec\delta(x) = \delta_x$ in a certain way, as will be made more precise in Section \ref{sec_testing}. From this point of view it is completely natural to consider distributions as \emph{mappings on smoothing kernels} via the embedding
\begin{equation}\label{lembed}
\begin{split}
\cD'(\Omega) &\to \cL_b ( \SK\Omega, C^\infty(\Omega))\\
u &\mapsto [ \vec\varphi \mapsto \langle u, \vec\varphi \rangle ].
\end{split}
\end{equation}
It is easy to see that this is a topological embedding which is natural in the sense of category theory (it commutes with diffeomorphisms).

In order to introduce a product we furthermore imitate the original idea of Colombeau and extend the range space of this embedding by replacing linear dependence on $\vec\varphi$ by smooth dependence, which leads us to the following definition.
\begin{definition}\label{basdef}
We define the space $\SK \Omega$ of \emph{smoothing kernels on $\Omega$} and the \emph{basic space} $\cE(\Omega)$ by
\begin{align*}
 \SK \Omega &\coleq C^\infty(\Omega, \cD(\Omega)), \\
 \basE\Omega &\coleq C^\infty(\SK\Omega, C^\infty(\Omega)).
\end{align*}
\end{definition}
$\basE\Omega$ contains $\cD'(\Omega)$ and $C^\infty(\Omega)$ via the linear embeddings $\iota$ and $\sigma$ defined by
\begin{align}
 &\iota\colon \cD'(\Omega) \to \basE\Omega,
(\iota u)(\vec\varphi) \coleq \langle u, \vec\varphi \rangle,
\label{iotadef}\\
&\sigma\colon C^\infty(\Omega) \to \basE\Omega,
(\sigma f)(\vec \varphi) \coleq f.
\label{sigmadef}
\end{align}
It inherits the algebra structure from $C^\infty(\Omega)$; in particular, its product is given by
\begin{equation}\label{eproduct}
(R_1 \cdot R_2)(\vec \varphi) = R_1(\vec\varphi) \cdot R_2(\vec \varphi)\qquad (R_1,R_2 \in \basE\Omega,\ \vec\varphi \in \SK\Omega).
\end{equation}
This multiplication defines a $C^\infty(\Omega)$-algebra structure on $\basE\Omega$ for which $\sigma$ becomes an algebra homomorphism. However, $\iota|_{C^\infty(\Omega)} \ne \sigma$, a defect which will be repaired by the usual quotient construction later on.

We point out the following intuitive interpretation: in a very general sense, the process of smoothing a distribution is represented by the following bilinear hypocontinuous mapping:
\begin{align*}
 \cD'(\Omega) \times \cL_b(\cD'(\Omega), C^\infty(\Omega)) & \to C^\infty(\Omega) \\
 (u , \Phi) &\to \Phi(u).
\end{align*}
Applying the exponential law \cite[\S 40 1.(3)]{Koethe2} gives the embedding \eqref{lembed} of $\cD'(\Omega)$ into a space of mappings having smoothing operators (or, equivalently, smoothing kernels) as a variable. Elements of that space can be multiplied by forgetting their linearity and thus viewing them as elements of $\basE\Omega$. This is technically simple and lends itself easily to generalizations to the tensorial case on a manifold $M$, which will need to be constructed on the space
\[ C^\infty( \cL_b( \cD'^r_s(M), \cT^r_s(M)), \cT^r_s(M)). \]
Here, $\cD'^r_s(M)$ is the space of $(r,s)$-tensor distributions and $\cT^r_s(M)$ the space of smooth tensor fields on $M$.

Finally, we mention that our method could be applied to other distribution spaces as well: instead of $\cD'(\Omega)$ and its regular counterpart $C^\infty(\Omega)$ on could as well regard $(C^\infty(\Omega))'$ and $\cD(\Omega)$, giving rise to the basic space $C^\infty(\cD(\Omega, C^\infty(\Omega)),\cD(\Omega))$ or, in the case of tempered distributions, $\cS'(\Omega)$ and $\cO_M(\Omega)$, which give rise to $C^\infty( \cO_M (\Omega, \cS(\Omega)), \cO_M(\Omega))$, etc.

\section{\texorpdfstring{Subalgebras of $\basE\Omega$}{Subalgebras of $E$}}\label{sec_subalg}

Informally, one can say that Colombeau algebras are constructed by forgetting certain properties of distributions (like linearity) and thus view them as elements of a larger space (of smooth functions) on which more operations are defined. In this process also the sheaf property gets lost. However, another property of distributions which is invariant under multiplication can be saved: for $u,v \in \cD'(\Omega)$, $\vec\varphi \in \SK \Omega$ and $x \in \Omega$, $(\iota(u) \cdot \iota(v))(\vec\varphi)(x) = \langle u, \vec\varphi(x) \rangle \cdot \langle v, \vec\varphi(x) \rangle$ depends only on $\vec\varphi$ at the point $x$. By retaining \emph{locality} properties of distributions to a certain degree one can not only recover the basic spaces of the algebras mentioned in Section \ref{sec_colalg} but also retain this sheaf property. 
We introduce the following notions.
\begin{definition}\label{locdef}A function $R \in \basE\Omega = C^\infty(\SK\Omega, C^\infty(\Omega))$ is called
\begin{asparaenum}[($i$)]
  \item \emph{local} if for all open subsets $U \subseteq \Omega$ and  smoothing kernels $\vec\varphi, \vec\psi \in \SK\Omega$ the equality $\vec\varphi|_U=\vec\psi|_U$ implies $R(\vec\varphi)|_U=R(\vec\psi)|_U$;
  \item \emph{point-local} if for all $x \in \Omega$ and smoothing kernels $\vec\varphi, \vec \psi \in \SK\Omega$ the equality $\vec\varphi(x)=\vec\psi(x)$ implies $R(\vec \varphi)(x)=R(\vec\psi)(x)$;
  \item \emph{point-independent} if for all $x,y \in \Omega$ and smoothing kernels $\vec\varphi, \vec\psi \in \SK\Omega$ the equality $\vec\varphi(x)=\vec\psi(y)$ implies $R(\vec \varphi)(x)=R(\vec \psi)(y)$.
\end{asparaenum}
We denote by $\basEloc\Omega$, $\basEploc\Omega$ and $\basEpi\Omega$ the subsets of $\basE\Omega$ consisting of local, point-local and point-independent elements, respectively, and by $\basLloc\Omega$, $\basLploc\Omega$ and $\basLpi\Omega$ the corresponding subsets of $\basL\Omega \coleq \cL(\SK\Omega, C^\infty(\Omega))$. Finally, $\basLcinf\Omega$ denotes the subspace of $\basL\Omega$ consisting of $C^\infty(\Omega)$-linear mappings, where the $C^\infty(\Omega)$-module structure on $\SK\Omega$ is given by $(f \cdot \vec\varphi)(x) \coleq f(x) \cdot \vec\varphi(x)$ for $f \in C^\infty(\Omega)$ and $\vec \varphi \in \SK\Omega$.
\end{definition}
\begin{remark}\label{rem_cats}
\begin{enumerate}[($i$)]
\item\label{rem_cats1} Relative to $\basE\Omega$, one sees that $\basEloc\Omega$ and $\basEploc\Omega$ are sub-$C^\infty(\Omega)$-algebras, $\basL\Omega$, $\basLloc\Omega$, $\basLploc\Omega$ and $\basLcinf\Omega$ are sub-$C^\infty(\Omega)$-modules, $\basEpi\Omega$ is a sub-$\bC$-algebra and $\basLpi\Omega$ is a linear subspace.
 \item $R \in \basL\Omega$ obviously is point-local if and only if for all $\vec\varphi \in \SK\Omega$ and $x \in \Omega$, $\vec\varphi(x)=0$ implies $R(\vec\varphi)(x)=0$, and local if and only if for all $\vec\varphi \in \SK\Omega$ and $U \subseteq \Omega$ open, $\vec\varphi|_U=0$ implies $R(\vec\varphi)|_U=0$, which is equivalent to $\supp R(\vec\varphi) \subseteq \supp \vec\varphi$.
 \item Many more notions giving rise to more refined subalgebras of $\basE\Omega$ can be thought of. For example, one could as well incorporate derivatives in the definition of point-locality, giving rise to a notion of \emph{k-jet locality} where $R(\vec\varphi)(x)$ depends only on the derivatives $\pd_x^\alpha \vec\varphi(x)$ of order $\abso{\alpha} \le k$. Potentially, a result similar to the Peetre theorem can be obtained here, resulting in locally finite order of elements of $\basEloc\Omega$. We will not examine this notion further here; however, for applications one should keep in mind the general principle that the basic space can be adapted to specific problems by restricting it to a subalgebra which is invariant under the operations one wishes to perform.
\end{enumerate}
\end{remark}

The assignment $\Omega \mapsto \basE\Omega$ defines a functor from the category of open subsets of some $\bR^n$ (or more generally, smooth manifolds) with diffeomorphisms into the category of locally convex spaces with bounded (equivalently smooth) linear mappings. This means that for any diffeomorphism $\mu\colon\Omega \to \Omega'$ we have an induced action $\mu_*\colon \basE\Omega \to \basE{\Omega'}$ given by $(\mu_* R)(\vec \varphi) \coleq \mu_*(R(\mu^*\vec\varphi)) = \mu_*(R(\mu^* \circ \vec\varphi \circ \mu))$. Because the properties of Definition \ref{locdef} are invariant under this action the definition of the respective subspaces also is functorial and the following inclusions, which are easily seen to be proper, are natural:
\begin{equation}\label{inclusions}
\begin{gathered}
\xymatrix@R=1em@C=1em{
\basE\Omega \ar@{}[r]|-*[@]\txt{$\supseteq$} & \basEloc\Omega \ar@{}[r]|-*[@]\txt{$\supseteq$} & \basEploc\Omega \ar@{}[r]|-*[@]\txt{$\supseteq$} &  \basEpi\Omega \\
\ar@{}[u]|-*[@]\txt{$\subseteq$} \ar@{}[r]|-*[@]\txt{$\supseteq$} \basL\Omega &\ar@{}[u]|-*[@]\txt{$\subseteq$} \ar@{}[r]|-*[@]\txt{$\supseteq$}   \basLloc\Omega &\ar@{}[u]|-*[@]\txt{$\subseteq$} \ar@{}[r]|-*[@]\txt{$\supseteq$}   \basLploc\Omega & \ar@{}[u]|-*[@]\txt{$\subseteq$} \basLpi\Omega
}
\end{gathered}
\end{equation}
Note that $\iota$ actually maps into $\basLpi\Omega$ and hence can be seen as a map into each of the above spaces, depending on the context. 

In the following proof and also later on, we use the following notation: given $\varphi \in \cD(\Omega)$, we denote by $\tilde\varphi$ the constant function $\tilde \varphi \coleq [x \mapsto \varphi] \in \SK \Omega$. We will also write $\varphi^\sim$ instead of $\tilde\varphi$.
\begin{proposition}\label{isos}We have the following natural isomorphisms:
\begin{enumerate}[($i$)]
\item\label{prop4.1} $\basEploc\Omega \cong C^\infty(\cD(\Omega), C^\infty(\Omega))$;
\item\label{prop4.2} $\basEpi\Omega \cong C^\infty(\cD(\Omega))$;
\item\label{prop4.3} $\basLploc\Omega\cong \basLcinf\Omega \cong C^\infty(\Omega, \cD'(\Omega))$;
\item\label{prop4.4} $\basLpi\Omega \cong \cD'(\Omega)$.
\end{enumerate}
\end{proposition}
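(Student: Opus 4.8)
The plan is to prove all four isomorphisms in parallel, from one common mechanism. The two ingredients are the \emph{constant-kernel} map $(\cdot)^\sim\colon\cD(\Omega)\to\SK\Omega$, $\varphi\mapsto\varphi^\sim$, which is linear and continuous (a direct check on the generating seminorms of $\SK\Omega$, which pull back to continuous seminorms of $\cD(\Omega)$ or to zero), hence a morphism; and the \emph{diagonal evaluation} $\SK\Omega\times\Omega\to\cD(\Omega)$, $(\vec\varphi,x)\mapsto\vec\varphi(x)$, which is smooth since it is the evaluation map of $C^\infty(\Omega,\cD(\Omega))$. For each target I would write down a \emph{forward} map, obtained by precomposing an element of $\basE\Omega$ (resp.\ $\basL\Omega$) with $(\cdot)^\sim$, and, in the point-independent cases, postcomposing with some fixed $\ev_{x_0}$; and a \emph{backward} map given by a formula of the type $R(\vec\varphi)(x)\coleq S(\vec\varphi(x))(x)$. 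What makes these mutually inverse is exactly the locality hypothesis: if $R$ is point-local then $R(\vec\varphi)(x)=R\bigl((\vec\varphi(x))^\sim\bigr)(x)$, because $\vec\varphi$ and the constant kernel $(\vec\varphi(x))^\sim$ agree at $x$; point-independence moreover allows replacing the remaining $x$ by an arbitrary base point.

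\textbf{Parts (i) and (ii).} For (i) I would send $R\in\basEploc\Omega$ to $[\varphi\mapsto R(\varphi^\sim)]$ and $S\in C^\infty(\cD(\Omega),C^\infty(\Omega))$ to $[\vec\varphi\mapsto(x\mapsto S(\vec\varphi(x))(x))]$. The first is a morphism, being $R$ composed with the morphism $(\cdot)^\sim$; the second is a morphism because, by the exponential law, it is enough that $(\vec\varphi,x)\mapsto S(\vec\varphi(x))(x)$ be smooth, which it is, being a composite of diagonal evaluation, $S$, and the evaluation $C^\infty(\Omega)\times\Omega\to\bC$. The backward map visibly lands in $\basEploc\Omega$, and the two are mutually inverse by the identity noted above. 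Part (ii) would then be obtained by restriction: under the isomorphism of (i), the extra requirement that $R(\vec\varphi)(x)$ not depend on $x$ turns, upon inserting constant kernels $\vec\varphi=\varphi^\sim$, into the requirement that $S(\varphi)$ be a constant function for every $\varphi$; so $\basEpi\Omega$ corresponds to those $S\in C^\infty(\cD(\Omega),C^\infty(\Omega))$ taking values in the constants $\subseteq C^\infty(\Omega)$, which we identify with $C^\infty(\cD(\Omega))$.

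\textbf{Part (iii).} I would first show $\basLploc\Omega=\basLcinf\Omega$ as subspaces of $\basL\Omega$. If $R$ is point-local and linear, then for fixed $x$ the functional $\varphi\mapsto R(\varphi^\sim)(x)$ is continuous and linear on $\cD(\Omega)$, hence equals $\langle u_x,\cdot\rangle$ for a unique $u_x\in\cD'(\Omega)$; point-locality then gives $R(\vec\varphi)(x)=\langle u_x,\vec\varphi(x)\rangle$ for all $\vec\varphi,x$, and since $(f\cdot\vec\varphi)(x)=f(x)\vec\varphi(x)$ this makes $C^\infty(\Omega)$-linearity of $R$ evident. Conversely, if $R$ is $C^\infty(\Omega)$-linear and $\vec\varphi(x)=0$, one writes $\vec\varphi$ as a finite sum $\sum_i g_i\vec\psi_i$ with $g_i\in C^\infty(\Omega)$ vanishing at $x$ and $\vec\psi_i\in\SK\Omega$ (Hadamard's lemma near $x$, plus a cutoff to globalize), whence $R(\vec\varphi)(x)=\sum_i g_i(x)R(\vec\psi_i)(x)=0$, so $R$ is point-local. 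To identify this common space with $C^\infty(\Omega,\cD'(\Omega))$, I would use that $R\mapsto R\circ(\cdot)^\sim$ maps it into $\cL_b(\cD(\Omega),C^\infty(\Omega))$ and is bijective onto it: injectivity is point-locality, and for surjectivity the formula $R(\vec\varphi)(x)\coleq T(\vec\varphi(x))(x)$ produces, for any $T\in\cL_b(\cD(\Omega),C^\infty(\Omega))$, a point-local $R\in\basL\Omega$ with $R\circ(\cdot)^\sim=T$ (continuity of $R$, and of the inverse, is a routine seminorm estimate). Composing with the standard topological isomorphism $\cL_b(\cD(\Omega),C^\infty(\Omega))\cong C^\infty(\Omega,\cD'(\Omega))$, which holds because $\cD(\Omega)$ and $C^\infty(\Omega)$ are nuclear (cf.\ \cite{Treves}), completes (iii); under the composite, $R$ goes to $x\mapsto u_x$.

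\textbf{Part (iv) and conclusion.} Part (iv) is the point-independent case of (iii): applied to constant kernels, point-independence forces $u_x=u_y$ for all $x,y$, so the element of $C^\infty(\Omega,\cD'(\Omega))$ attached to $R$ is constant; the constant maps form $\cD'(\Omega)$, and the resulting isomorphism $\cD'(\Omega)\cong\basLpi\Omega$ is precisely $\iota$ of \eqref{iotadef}, with inverse $R\mapsto[\varphi\mapsto R(\varphi^\sim)(x_0)]$. In all parts the maps in play are linear and are morphisms (composites of $(\cdot)^\sim$, evaluations, compositions and the duality pairing), as are their inverses, and the relevant subspaces carry the subspace topology; so all displayed maps are isomorphisms in the category of locally convex spaces with bounded (equivalently smooth) linear mappings, and they are natural since $(\cdot)^\sim$, evaluation and the pairing are. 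I expect the one genuinely delicate point to be the passage through $C^\infty(\Omega,\cD'(\Omega))$ in (iii) — reconciling the convenient-calculus notion of smoothness with the classical smoothness of $\cD'(\Omega)$-valued maps; everything else is bookkeeping with the constant-kernel trick.
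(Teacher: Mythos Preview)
Your argument is correct, and parts (i), (ii) and (iv) are essentially the paper's proof: the same constant-kernel map $\varphi\mapsto\tilde\varphi$ and diagonal evaluation $(\vec\varphi,x)\mapsto\vec\varphi(x)$, with smoothness read off from the exponential law.

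The one genuine difference is in (iii), in the direction $\basLcinf\Omega\subseteq\basLploc\Omega$. You argue via a vector-valued Hadamard lemma: given $\vec\varphi(x_0)=0$, you decompose $\vec\varphi=\sum_i g_i\vec\psi_i$ with $g_i(x_0)=0$ and conclude $R(\vec\varphi)(x_0)=\sum_i g_i(x_0)R(\vec\psi_i)(x_0)=0$. This is correct (the integral formula for the Hadamard remainder goes through in $\cD(K)$, and your cutoff trick globalizes it). The paper instead identifies $\SK\Omega\cong C^\infty(\Omega)\widehat\otimes\cD(\Omega)$ and checks on elementary tensors $f\otimes\varphi$ that $\ev_x\circ R$ factors through $\ev_x\widehat\otimes\id$, extending to all of $\SK\Omega$ by continuity and density. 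Your route is more hands-on and avoids completed tensor products; the paper's route is cleaner once that machinery is in place and makes the $C^\infty(\Omega)$-module structure do the work directly. Both deliver the same conclusion, and the remaining identification with $C^\infty(\Omega,\cD'(\Omega))$ (via $\cL_b(\cD(\Omega),C^\infty(\Omega))$) is what the paper means by ``follows as in (i)''.
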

\begin{proof}
For \eqref{prop4.1} we have the correspondence 
\begin{equation}\label{alphaa}
\begin{split}
R \in \basEploc\Omega &\cong C^\infty(\cD(\Omega), C^\infty(\Omega)) \ni S\\
R(\vec\varphi)(x) & \coleq S(\vec\varphi(x))(x) \\
S(\varphi)(x) & \coleq R(\tilde \varphi)(x)
\end{split}
\end{equation}
for $\vec\varphi \in \SK\Omega$, $x \in \Omega$ and $\varphi \in \cD(\Omega)$.
For \eqref{prop4.2} we have the correspondence
\begin{align*}
 R \in \basEpi\Omega &\cong C^\infty(\cD(\Omega)) \ni S \\
R(\vec\varphi)(x) & \coleq S(\vec\varphi(x)) \\
S(\varphi) &\coleq R(\tilde \varphi) \in \bC \subseteq C^\infty(\Omega)
\end{align*}
for $\vec\varphi \in \SK\Omega$, $x \in \Omega$ and $\varphi \in \cD(\Omega)$, noting that $R(\tilde \varphi)$ is constant.

For \eqref{prop4.3}, given $R \in \basLploc\Omega$, we have $R(f \cdot \vec\varphi)(x) = R(f(x)\cdot \vec\varphi)(x) = f(x) \cdot R(\vec\varphi)(x) = (f \cdot R(\vec \varphi))(x)$ for $f \in C^\infty(\Omega)$, $\vec\varphi \in \SK\Omega$ and $x \in \Omega$, hence $R \in \basLcinf\Omega$. For the converse we identify $\SK\Omega$ with $C^\infty(\Omega) \widehat\otimes \cD(\Omega)$ (see \cite{Treves,FDVV} for details) and claim that the following diagram commutes for all $R \in \basLcinf\Omega$ and $x \in \Omega$:
\[
 \xymatrix@C=4em{
\ar[d]_{R} C^\infty(\Omega) \widehat\otimes \cD(\Omega) \ar[r]^-{\ev_x \widehat\otimes \id} & \bC \widehat\otimes \cD(\Omega) \cong \cD(\Omega) \ar[d]^{\varphi \mapsto 1 \otimes \varphi} \\
C^\infty(\Omega) \ar[d]_{\ev_x} & C^\infty(\Omega)\widehat\otimes \cD(\Omega)\ar[d]^{R} \\
\bC & C^\infty(\Omega) \ar[l]^{\ev_x}
}
\]
In fact, because all maps are continuous it suffices to check commutativity on elements of $C^\infty(\Omega)\otimes\cD(\Omega)$:
\begin{multline*}
 R(1 \otimes ( \ev_x \otimes \id)(f \otimes \varphi))(x)  = R(1 \otimes(f(x) \otimes \varphi))(x) = R(1 \otimes (f(x)\cdot \varphi))(x) \\
= f(x) \cdot R(1 \otimes \varphi)(x) = (f \cdot R(1 \otimes \varphi))(x) = R(f \otimes \varphi)(x).
\end{multline*}
Consequently, $\vec\varphi(x)=0$ implies $(\ev_x \widehat\otimes \id) \vec\varphi = 0$ and hence $(\ev_x \circ R)(\vec\varphi) = 0$, which means that $R \in \basLploc\Omega$.

$\basLploc\Omega \cong C^\infty(\Omega, \cD'(\Omega))$ follows as in \eqref{prop4.1} and \eqref{prop4.4} as in \eqref{prop4.2}. That the given isomorphisms result in smooth mappings follows from the exponential law \cite[3.12]{KM}.
\end{proof}

From the above proof we note that
\begin{align}
R \in \basEploc\Omega &\Rightarrow R(\vec\varphi)(x) = R(\vec\varphi(x)^\sim)(x) \label{plocform} \\
R \in \basEpi\Omega &\Rightarrow R(\vec\varphi)(x) = R(\vec\varphi(x)^\sim) \label{piform}
\end{align}
for all $\vec\varphi \in \SK\Omega$ and $x \in \Omega$.

\begin{remark}\label{rem_baslo}\begin{enumerate}[($i$)]
\item Using the isomorphisms of Proposition \ref{isos} one sees, taking  Remark \ref{keinanull} into account, that $\basEpi\Omega$ is the basic space of $\cG^o$ originally used by Colombeau \cite{ColNew} while $\basEploc\Omega$ was used (up to an application of the exponential law) in the construction of $\hat\cG$ \cite{found, global}. Other basic spaces can be obtained by considering the pullback of $\basE\Omega$ along a map into $\SK\Omega$; for example, the basic space of $\cG^s$ is obtained as the pullback of $\basE\Omega$ along $k \mapsto \vec\psi_k$ for a specific sequence of smoothing kernels $(\vec\psi_k)_k$ (see Section \ref{sec_realize}).
This underlines the significance of $\basE\Omega$ at the base of a unified understanding of Colombeau algebras.
\item \label{rem_sigma}The mapping $\sigma$ defined by \eqref{sigmadef} actually sends $C^\infty(\Omega)$ into $\basEploc\Omega$ but not into $\basEpi\Omega$ nor in any of the spaces of linear maps because $\sigma f$ is not linear in $\vec\varphi$. Hence, the $\sigma$-embedding only exists for the spaces $\basE\Omega$, $\basEloc\Omega$ and $\basEploc\Omega$. The use of this embedding lies in the fact that it embeds smooth functions as they are, without any regularization involved.
\item Proposition \ref{isos} clarifies the role, in obtaining sheaf properties, of coupling all occurrences (in various slots) of the space variable when testing. For example, in $\cG^d$ an element $R$ of the basic space $C^\infty(\cD(\Omega), C^\infty(\Omega))$ is tested by an expression of the form
\[ R(\vec \varphi_\e(x))(x) \]
with $\vec\varphi_\e \in \SK\Omega$ and $x \in \Omega$ (cf.~\cite{gdnew}). That $x$ appears both in the codomain slot of $R$ and the first subslot of its domain is essential for obtaining localizability, as it forces the support of $\vec\varphi_\e(x)$ to be near $x$ if $(\vec\varphi_\e)_\e$ is localizing as defined below (Definition \ref{def_localnet}). This coupling now can be formally understood to come about by the isomorphism \eqref{alphaa}, which states exactly that evaluating $R$ on a smoothing kernel $\vec\varphi_\e$ naturally gives the expression $R(\vec\varphi_\e(x))( x)$.
\end{enumerate}
\end{remark}

\section{Testing}\label{sec_testing}

The second main constituent of Colombeau algebras besides the basic space is the testing procedure, whose underlying principles we will describe now in functional analytic terms. We recall that, from Colombeau's original algebra $\cG^o$ up to and including the latest tensorial algebra $\hat\cG^r_s$, the proper character and intuitive understanding of the test objects was a source of many difficulties. Although a comprehensive study of these test objects is beyond the scope of this article (see \cite{found} for one possible direction to follow), we will list the essential characteristics they need to have. This does not characterize their variety at large, but at least we obtain a conceptually simple framework for adapting the construction to various settings.

The aim of testing is to \emph{identify} $\iota f$ and $\sigma f$ for $f \in C^\infty(\Omega)$ by a quotient construction and hence turn the restriction of $\iota$ to $C^\infty(\Omega)$ into an algebra homomorphism. In other words, the product \eqref{eproduct} will then preserve the product of smooth functions in the quotient. The key mechanism behind this is based on the following observation, for which we use that, similar to the case of smoothing kernels as in \eqref{SOSKiso}, there is an isomorphism
\[ \cL_b( C^\infty(\Omega), C^\infty(\Omega)) \cong C^\infty(\Omega, \cE'(\Omega)) \]
under which the identity on $C^\infty(\Omega)$ corresponds to $\vec\delta \coleq [ x\mapsto \delta_x ]$.

\begin{proposition}\label{prop_testing}
 Let $u \in \cD'(\Omega)$ and $f \in C^\infty(\Omega)$. Choose a sequence $(\vec\varphi_k)_k$ in $\SK\Omega$ such that for the corresponding sequence $(\Phi_k)_k$ in $\cL(\cD'(\Omega), C^\infty(\Omega))$ we have
 \begin{equation}\label{smoothapprox}
  \Phi_k|_{C^\infty(\Omega)} \to \id_{C^\infty(\Omega)}\textrm{ in }\cL_b(C^\infty(\Omega), C^\infty(\Omega))
 \end{equation}
  as well as
  \begin{equation}\label{distrapprox}
\Phi_k \to \id_{\cD'(\Omega)}\textrm{ in }\cL_b(\cD'(\Omega), \cD'(\Omega)).
\end{equation}
Then $u = f$ if and only if $(\iota u - \sigma f )(\vec\varphi_k) \to 0 \textrm{ in }C^\infty(\Omega)$.
\end{proposition}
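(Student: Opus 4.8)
The plan is to unwind the two definitions \eqref{iotadef} and \eqref{sigmadef} so that the asserted convergence becomes a statement purely about the smoothing operators $\Phi_k$, and then to read off each implication directly from the hypothesis tailored to it. First I would record that, by the conventions of Section~\ref{sec_smoothop}, $(\iota u)(\vec\varphi_k) = \langle u, \vec\varphi_k \rangle = \Phi_k(u)$ as an element of $C^\infty(\Omega)$, while $(\sigma f)(\vec\varphi_k) = f$ for every $k$; hence the condition ``$(\iota u - \sigma f)(\vec\varphi_k) \to 0$ in $C^\infty(\Omega)$'' is literally ``$\Phi_k(u) \to f$ in $C^\infty(\Omega)$''. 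I would also isolate two soft facts used throughout: on any space $\cL_b(E,F)$ the topology of bounded convergence is finer than that of pointwise convergence (singletons being bounded), so evaluation at a fixed vector is continuous and a convergent net of operators converges on every fixed argument; and the canonical inclusion $C^\infty(\Omega) \hookrightarrow \cD'(\Omega)$ is continuous, while $\cD'(\Omega)$ is Hausdorff.

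For the forward implication, assume $u = f$, i.e.\ that $u$ is the distribution determined by $f$, viewed in $C^\infty(\Omega) \subseteq \cD'(\Omega)$. Then $\Phi_k(u) = (\Phi_k|_{C^\infty(\Omega)})(f)$, and \eqref{smoothapprox} together with the continuity of evaluation at $f$ gives $\Phi_k(u) \to \id_{C^\infty(\Omega)}(f) = f$ in $C^\infty(\Omega)$; hence $(\iota u - \sigma f)(\vec\varphi_k) = \Phi_k(u) - f \to 0$ in $C^\infty(\Omega)$, as required.

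For the converse, assume $\Phi_k(u) \to f$ in $C^\infty(\Omega)$. Applying the continuous inclusion $C^\infty(\Omega) \hookrightarrow \cD'(\Omega)$, the same convergence holds in $\cD'(\Omega)$. On the other hand, \eqref{distrapprox} together with the continuity of evaluation at $u$ yields $\Phi_k(u) \to \id_{\cD'(\Omega)}(u) = u$ in $\cD'(\Omega)$. Since $\cD'(\Omega)$ is Hausdorff, limits are unique, so $u = f$ in $\cD'(\Omega)$, which is the desired conclusion.

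I do not expect a genuine obstacle here: the entire content is in matching each of the two hypotheses to the single use it is designed for --- one in $\cL_b(C^\infty(\Omega),C^\infty(\Omega))$ producing convergence in $C^\infty(\Omega)$, the other in $\cL_b(\cD'(\Omega),\cD'(\Omega))$ producing convergence in $\cD'(\Omega)$ --- and in bridging the two spaces via the continuous embedding $C^\infty(\Omega)\hookrightarrow\cD'(\Omega)$. The one point deserving an explicit line is the remark that evaluation at a fixed vector is continuous for the topology of bounded convergence, since it is exactly this that licenses passing from operator convergence to the pointwise statements $\Phi_k(u)\to f$ and $\Phi_k(u)\to u$; everything else is bookkeeping with the definitions of $\iota$, $\sigma$ and $\Phi_k$.
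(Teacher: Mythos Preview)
Your proof is correct. The converse direction is essentially identical to the paper's: both argue that $\Phi_k(u)\to f$ in $C^\infty(\Omega)\hookrightarrow\cD'(\Omega)$ while $\Phi_k(u)\to u$ in $\cD'(\Omega)$ by \eqref{distrapprox}, and conclude by uniqueness of limits.

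For the forward direction the approaches differ. You pass directly from \eqref{smoothapprox} to pointwise convergence at $f$, obtaining $\Phi_k(f)\to f$ in $C^\infty(\Omega)$ in one line. The paper instead notes that for $f\in C^\infty(\Omega)$ the map $\iota f$ extends continuously from $\SK\Omega=C^\infty(\Omega,\cD(\Omega))$ to $C^\infty(\Omega,\cE'(\Omega))$ (via the identification $C^\infty(\Omega)\cong\cL_b(\cE'(\Omega))$ and tensoring with $\id_{C^\infty(\Omega)}$), and then uses that under the isomorphism $\cL_b(C^\infty(\Omega),C^\infty(\Omega))\cong C^\infty(\Omega,\cE'(\Omega))$ hypothesis \eqref{smoothapprox} becomes $\vec\varphi_k\to\vec\delta$, so $(\iota f)(\vec\varphi_k)\to(\iota f)(\vec\delta)=f$. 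Your route is more elementary and shorter; the paper's route, while heavier, makes explicit the structural point that $\iota f$ is the restriction of a continuous linear functional on the larger space containing $\vec\delta$, which motivates the surrounding discussion of testing.
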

\begin{proof}
 Assuming $u=f$, $\iota f$ has a continuous extension to $C^\infty(\Omega, \cE'(\Omega))$, which can be seen from
 \begin{align*}
 C^\infty(\Omega) & \cong \cL_b(\cE'(\Omega)) \subseteq \cL_b(\cE'(\Omega) \widehat\otimes C^\infty(\Omega), C^\infty(\Omega))\\
&\cong \cL_b(C^\infty(\Omega, \cE'(\Omega)), C^\infty(\Omega)),
\end{align*}%
where the inclusion is given by $u \mapsto u \hat\otimes \id_{C^\infty(\Omega)}$. Hence, $(\iota - \sigma)(f)(\vec \varphi_k) \to (\iota f)(\vec\delta) - f = 0$. Conversely, $(\iota u)(\vec\varphi_k) = \Phi_k(u) \to u$ in $\cD'(\Omega)$ and $(\iota u)(\vec \varphi_k) \to f$ in $C^\infty(\Omega)$ and thus in $\cD'(\Omega)$ implies $u=f$. 
\end{proof}

Hence, in the sum of the subspaces $\iota(\cD'(\Omega))$ and $\sigma(C^\infty(\Omega))$ in $\basE\Omega$, $(\iota-\sigma)(C^\infty(\Omega))$ can be \emph{characterized} as the set of those $R$ satisfying $R(\vec\varphi_k) \to 0$. This characterization does not hold in the whole space $\basE\Omega$, however, and in order to obtain an ideal containing $(\iota - \sigma)(C^\infty(\Omega))$ one needs to employ some kind of asymptotic scale (cf.~\cite{0919.46027,1052.46002,1126.46030}) to ensure that for $R(\vec\varphi_k) \to 0$ and given $u \in \cD'(\Omega)$, $R(\vec \varphi_k)\cdot (\iota u)(\vec\varphi_k)$ still converges to zero. 
There are many possible choices of sequences $(\vec\varphi_k)_k$ for this purpose;  we will give an example resembling the construction of the classical full Colombeau algebras.

For this we introduce a grading on the test objects; a sequence $(\vec\varphi_k)_k$ shall be said to be of oder $q \in \bN_0$ if \eqref{smoothapprox} converges like $O(k^{-(q+1)})$ measured with respect to continuous seminorms in that space; Additionally, one also requires suitable estimates on the growth of $\Phi_k(u)$ for $u \in \cD'(U)$ in order for embedded distributions to be moderate. Formulating this in terms of continuous seminorms of the respective spaces we obtain the following definition of test objects of order $q$: they are sequences $(\vec\varphi_k)_k \in \SK\Omega^\bN$ satisfying
\begin{enumerate}[($i$)]
 \item \label{prop1} $\forall p \in \cs ( \cL_b(C^\infty(\Omega), C^\infty(\Omega ))$: $p ( \Phi_k|_{C^\infty(\Omega)} - \id_{C^\infty(\Omega)} ) = O(k^{-(q+1)})$.
 \item \label{prop3} $\forall p \in \cs ( \cL_b(\cD'(\Omega), C^\infty(\Omega))$ $\exists N \in \bN$: $p ( \Phi_k ) = O(k^N)$
 \item \label{prop2} $\forall p \in \cs ( \cL_b(\cD'(\Omega), \cD'(\Omega))$: $p ( \Phi_k - \id_{\cD'(\Omega)} ) \to 0$.
\end{enumerate}
All Colombeau algebras mentioned in Section \ref{sec_colalg} employ tests using such sequences (or more generally, nets indexed by $(0,1]$) of smoothing kernels or variants thereof.

It is instructive to translate these conditions into analytic terms: \eqref{prop1} means that for any bounded subset $B \subseteq C^\infty(\Omega)$, any compact set $K \subseteq \Omega$ and any multiindex $\alpha \in \bN_0^n$,
\begin{equation}\label{sim0}
\sup_{x \in K, f \in B} \abso{ \int f(y) \pd_x^\alpha \vec\varphi_k(x)(y)\,\ud y - \pd^\alpha f(x) } = O(k^{-(q+1)})\qquad (k \to \infty).
\end{equation}
For \eqref{prop3}, a continuous seminorm can be assumed to be of the following form with $B \subseteq \cD'(\Omega)$ bounded, $K \subseteq \Omega$ compact and $k \in \bN_0$:
\begin{align*}
 p(\Phi) &= \sup_{x \in K, u \in B,\abso{\alpha} \le k} \abso{ \pd_x^\alpha \Phi(u)(x) } = \sup_{x \in K, u \in B, \abso{\alpha}\le k} \abso{\langle u, \pd_x^\alpha \vec\varphi(x)\rangle }
\end{align*}
Because bounded subsets of $\cD'(\Omega)$ are equicontinuous and $\{ \pd_x^\alpha \vec\varphi(x)\ |\ x \in K, \abso{\alpha} \le k \}$ is bounded in $\cD'(\Omega)$, this can be estimated by
\begin{equation}\label{sim1}
p(\Phi) \le \sup \{ \abso{\pd_x^\alpha \pd_y^{\beta} \vec\varphi(x)(y)}\ |\ x \in K, y \in \Omega, \abso{\alpha} \le k, \abso{\beta} \le l \}
\end{equation}
for some $l \in \bN$. It is remarkable to see that properties \eqref{sim0} and \eqref{sim1} already come very close to those of the test objects used for $\cG^d$ and $\hat\cG$ (cf.~\cite{gdnew}), which were found on a completely different route (see \cite[Chapter 2]{GKOS}). Condition \eqref{prop2} implies that the embedded image of a distribution converges weakly to that  distribution in the sense of \emph{association}, which is a central property to have for compatibility with classical distribution theory and in fact a cornerstone of Colombeau theory. Finally, \eqref{prop1} together with \eqref{prop3} imply that the product of a smooth function and a distribution in the Colombeau algebra is associated to their classical product.

Note that in $\cG^d$ a stronger version of \eqref{prop3} was used, which together with the \emph{localizing} property of smoothing kernels (see below) implies (iii). We have turned the tables here and required \eqref{prop2} from the beginning, which enables one to have the natural formulation of \eqref{prop3} given here. This simplifies many things. As a matter of fact, these three conditions give a very natural and easy way to the construction of a diffeomorphism invariant Colombeau algebra, which is a sheaf if we also require the test objects to be localizing as defined below. We will sketch its construction now.

First, denote the space of test objects of order $q$ as defined above by $\widetilde\cA_q(U)$.
Then, call $R \in \basEloc U$ (one could also use $\SbasEploc$ or $\SbasEpi$ instead) \emph{moderate} if $\forall p \in \cs ( C^\infty(U) )$ $\exists N,q \in \bN_0$ $\forall (\vec\varphi_k)_k \in \widetilde\cA_q(U)$: $p ( R ( \vec\varphi_k ) ) = O(k^{N})$, and \emph{negligible} if $\forall p \in \cs ( C^\infty(U))$ $\forall m \in \bN$ $\exists q \in \bN_0$ $\forall (\vec\varphi_k)_k \in \widetilde\cA_q(U)$: $p(R(\vec\varphi_k)) = O(k^{-m})$. The quotient of moderate by negligible elements then satisfies all the requiremens for a Colombeau algebra, as is easily verified. Furthermore and most importantly, the resulting algebra is automatically diffeomorphism invariant: because the properties of the test objects are formulated in terms of continuous seminorms and because diffeomorphisms acting on smooth functions and distributions are linear continuous maps, the spaces of test objects as well as moderateness and negligiblity are immediately seen to be invariant under the action of diffeomorphisms. The sheaf property is obtained if we require the test objects to be localizing in addition, as is seen in Section \ref{sec_sheaf}

This formulation of the basic spaces and the testing procedure hence pays off quickly and saves one from a big part of the technicalities hitherto involved in constructing a diffeomorphism invariant full algebra. We will apply these ideas in an upcoming paper in order to construct an algebra of generalized sections of vector bundles on manifolds.

\section{Sheaf properties} \label{sec_sheaf}

$\cD'$ is well known to be a sheaf. In this section we will study in which sense the spaces of diagram \eqref{inclusions} can be turned into sheaves. We first introduce a restriction mapping on $\SbasEloc$.

\begin{theorem}\label{eloc_restriction}Let $U,V \subseteq \Omega$ be open, $V \subseteq U$ and $R \in \basEloc{U}$. Then there is a \emph{unique} element $R|_V \in \basEloc{V}$ such that
\begin{itemize}
 \item[(*)] for any open set $W$ with $\overline{W} \subseteq V$, $\vec\varphi \in \SK V$ and $\vec\psi \in \SK U$ such that $\vec\varphi|_W = \vec\psi|_W$ we have $R|_V (\vec\varphi)|_W = R(\vec\psi)|_W$.
\end{itemize}
The mapping $R \mapsto R|_V$ preserves linearity and the locality properties of Definition \ref{locdef}. Furthermore, for $U,V,W \subseteq \Omega$ open with $W \subseteq V \subseteq U$ and $R \in \basEloc U$,  $(R|_V)|_W = R|_W$.
\end{theorem}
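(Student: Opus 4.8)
The plan is to construct $R|_V$ by a cutoff-and-glue procedure and then read off every asserted property from the corresponding property of $R$; the compatibility relation $(R|_V)|_W = R|_W$ is handled last, via the uniqueness part. \emph{Construction:} given $\vec\varphi \in \SK{V}$, I define $R|_V(\vec\varphi) \in C^\infty(V)$ locally. Around each $x_0 \in V$ I pick an open neighbourhood $W$ with $\overline W$ compact and $\overline W \subseteq V$, together with a cutoff $\theta \in \cD(V)$ equal to $1$ on a neighbourhood of $\overline W$. By the $C^\infty(V)$-module structure on $\SK{V}$ from Definition \ref{locdef}, $\theta\cdot\vec\varphi \in \SK{V}$, and since $\supp\theta$ is compact and contained in $V \subseteq U$, its extension by zero is an element $\vec\psi \in \SK{U}$ with $\vec\psi|_W = \vec\varphi|_W$; I set $R|_V(\vec\varphi)|_W \coleq R(\vec\psi)|_W$. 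If $(W_1,\theta_1)$ and $(W_2,\theta_2)$ are two such choices, the corresponding kernels $\vec\psi_1,\vec\psi_2$ both coincide with $\vec\varphi$ on $W_1 \cap W_2$ (where $\theta_1 \equiv \theta_2 \equiv 1$), so locality of $R$ forces $R(\vec\psi_1)|_{W_1 \cap W_2} = R(\vec\psi_2)|_{W_1 \cap W_2}$; hence the local pieces agree on overlaps and, being restrictions of smooth functions, glue to a well-defined $R|_V(\vec\varphi) \in C^\infty(V)$. It is characterised pointwise by $R|_V(\vec\varphi)(x) = R(\vec\psi)(x)$ for \emph{any} $\vec\psi \in \SK{U}$ coinciding with $\vec\varphi$ on a neighbourhood of $x$ (existence by the cutoff, independence of the value by locality of $R$ on the open set where two such candidates agree).

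For the smoothness of $R|_V$, fix a relatively compact open $W$ with $\overline W \subseteq V$ and a cutoff $\theta$ as above; the map $\lambda_W \colon \SK{V} \to \SK{U}$, $\vec\varphi \mapsto (\theta\cdot\vec\varphi)$ extended by zero, is linear and bounded (multiplication by a fixed test function followed by extension by zero). Hence $R \circ \lambda_W \in C^\infty(\SK{V}, C^\infty(U))$, so $\vec\varphi \mapsto R|_V(\vec\varphi)|_W = (R\circ\lambda_W)(\vec\varphi)|_W$ is smooth into $C^\infty(W)$. Writing $C^\infty(V)$ as the projective limit of the spaces $C^\infty(V_n)$ along an exhaustion of $V$ by relatively compact open sets, and using that curves into a projective limit are smooth iff all components are, one concludes $R|_V \in \basE{V}$.

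The remaining properties each follow from the pointwise formula by a short local argument. Property $(*)$: if $\vec\varphi \in \SK{V}$, $\vec\psi \in \SK{U}$ and $\vec\varphi|_W = \vec\psi|_W$ with $\overline W \subseteq V$, then for every $x \in W$ the kernel $\vec\psi$ coincides with $\vec\varphi$ near $x$, whence $R|_V(\vec\varphi)(x) = R(\vec\psi)(x)$ and so $R|_V(\vec\varphi)|_W = R(\vec\psi)|_W$. Locality of $R|_V$, and preservation of point-locality, point-independence and linearity, are obtained by comparing, at a point $x$, the auxiliary kernels $\theta\cdot\vec\varphi_1$ and $\theta\cdot\vec\varphi_2$ (with $\theta \equiv 1$ near $x$, supported in a small ball inside the relevant open set): these coincide, resp.\ coincide at $x$, resp.\ depend linearly on the $\vec\varphi_i$, so the corresponding property of $R$ transfers. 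Thus $R|_V \in \basEloc{V}$ and $R \mapsto R|_V$ preserves linearity and the locality properties of Definition \ref{locdef}.

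Uniqueness: if $\tilde R \in \basEloc{V}$ also satisfies $(*)$, then for $\vec\varphi \in \SK{V}$ and $x \in V$, choosing a relatively compact $W \ni x$ with $\overline W \subseteq V$ and a cutoff-extended $\vec\psi \in \SK{U}$ with $\vec\psi|_W = \vec\varphi|_W$ gives $\tilde R(\vec\varphi)|_W = R(\vec\psi)|_W = R|_V(\vec\varphi)|_W$, hence $\tilde R = R|_V$. For $W \subseteq V \subseteq U$, both $(R|_V)|_W$ and $R|_W$ lie in $\basEloc{W}$, so by this uniqueness it suffices to check that $(R|_V)|_W$ satisfies $(*)$ relative to $W \subseteq U$; one verifies this pointwise, inserting near each relevant point a cutoff-extended $\vec\chi \in \SK{V}$ agreeing with $\vec\varphi$ there and chaining $(*)$ for $W \subseteq V$ with $(*)$ for $V \subseteq U$. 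I expect the genuinely delicate step to be the smoothness of $R|_V$ — establishing that the localization maps $\lambda_W$ are bounded linear and that smoothness into $C^\infty(V)$ reduces to the relatively compact pieces, i.e.\ the functional-analytic bookkeeping behind gluing the pieces $R|_V(\vec\varphi)|_W$ \emph{smoothly}; the locality manipulations are routine once the pointwise characterisation is in place.
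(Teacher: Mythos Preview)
Your argument is correct and follows essentially the same route as the paper: the same cutoff-and-glue construction of $R|_V(\vec\varphi)$ via extensions $\vec\psi = \theta\cdot\vec\varphi$, the same smoothness check through the linear continuous map $\lambda_W$ combined with the projective description of $C^\infty(V)$, and the same direct verification of locality and uniqueness. The only cosmetic difference is that you derive transitivity from the uniqueness statement rather than by a direct computation as in the paper, which is an equally valid (and arguably cleaner) way to close the argument.
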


\begin{proof}
Given $\vec\varphi \in \SK V$ we define, for any open set $W$ with $\overline{W} \subseteq V$, $f_W \coleq R(\vec\psi)|_W$ where $\vec\psi \in \SK U$ is chosen such that $\vec\psi|_W = \vec\varphi|_W$. For this, take a smooth function $\rho_W \in C^\infty(V)$ with support in a closed subset of $V$ and $\rho_W \equiv 1$ on $W$, and let $\vec\psi$ be the trivial extension by zero of $\rho_W \cdot \vec\varphi$ to $U$. Because of locality of $R$, $f_W$ does not depend on the choice of $\vec\psi$.

Covering $V$ by such sets $W$ we obtain a family $(f_W)_W$ such that for any two $W_1,W_2$ with corresponding $\vec\psi_1$, $\vec\psi_2 \in \SK U$, $f_{W_1}|_{W_1 \cap W_2} = R(\vec\psi_1)|_{W_1 \cap W_2} = R(\vec\psi_2)|_{W_1 \cap W_2} = f_{W_2}|_{W_1 \cap W_2}$ because $\vec\psi_1|_{W_1 \cap W_2} = \vec\varphi|_{W_1 \cap W_2} = \vec\psi_2|_{W_1 \cap W_2}$. Hence, there exists a unique element $f \in C^\infty(U)$ such that $f|_W = f_W$ for all $W$. We set $R|_V(\vec\varphi) \coleq f$.

By \cite[3.8]{KM}, $R \in C^\infty(\SK V, C^\infty(V))$ if $|_W \circ R \in C^\infty(\SK V, C^\infty(W))$ for all $W$ as above, which is the case if for $c \in C^\infty(\bR, \SK V)$, $|_W \circ R \circ c \in C^\infty(\bR, C^\infty(W))$. The latter map is given by
\begin{equation}\label{heiis}
 R(c(t))|_W = R ( \rho_W \cdot c(t))|_W
\end{equation}
where $\rho_W \cdot c(t)$ is trivially extended to a map in $C^\infty(U, \cD(U))$. The result depends smoothly on $t$, so \eqref{heiis} is smooth.

$R|_V$ is local: suppose $\vec\varphi|_W = \vec\psi|_W$ for $\vec\varphi, \vec\psi \in \SK V$ and $W \subseteq V$ open. Then for any open set $X$ with $\overline{X} \subseteq W$, $R|_V(\vec\varphi)|_X = R(\rho_X \cdot \vec\varphi)|_X = R(\rho_X \cdot \vec\psi)|_X = R|_V ( \vec\psi)|_X$ for suitable $\rho_X$, hence $R|_V(\vec\varphi)|_W = R|_V(\vec\psi)|_W$. If $R$ is point-local or point-independent, the same obviously holds for $R|_V$.

As to uniqueness, assume any $S \in \basEloc V$ satisfies (*). Then for $\vec\varphi \in \SK V$, $W$ as above and $\vec\psi \in \SK U$ with $\vec\varphi|_W = \vec\psi|_W$, $S(\vec\varphi)|_W = R(\vec\psi)|_W = R|_V(\vec\psi)|_W$ and hence $S = R$.

For transitivity, let $X \subseteq W$ be open with $\overline{X} \subseteq W$, $\vec\varphi \in \SK W$, $\vec\psi_1 \in \SK V$ such that $\vec\psi_1|_X = \vec\varphi|_X$ and $\vec\psi_2 \in \SK U$ such that $\vec\psi_2|_X = \vec\psi_1|_X$. Then also $\vec\psi_2|_X = \vec\varphi|_X$ and thus $(R|_V)|_W(\vec\varphi)|_X = R|_V(\vec\psi_1)|_X = R(\vec\psi_2)|_X = R|_W ( \vec\varphi)|_X$.
\end{proof}

This construction fails for $\basE\Omega$ and $\basL\Omega$, however. Furthermore, elements of $\basEloc\Omega$ and its subspaces (except for $\cD'(\Omega)$) are not uniquely defined by their restrictions to the elements of an open covering of $\Omega$. This is seen by the following examples which, by the inclusion relations of \eqref{inclusions}, also settle the case of $\basEploc\Omega$ and $\basLloc\Omega$.

\begin{example}
\begin{enumerate}[($i$)]
 \item Take $\delta_a \cdot \delta_b \in \basEpi\Omega$ with $a \ne b$ and cover $\Omega$ by open sets such that none of them contains both $a$ and $b$; then the restrictions to these open sets are zero, but $\delta_a \cdot \delta_b$ is not.
\item Let $\Omega = \R$, $f \in C^\infty(\R)$ with $f(x)=0$ for $x \le 0$, $f(x)=1$ for $x \ge 1$ and define $R \in \basLploc\Omega \cong C^\infty(\Omega, \cD'(\Omega))$ by $R(x) \coleq f(x) \cdot \delta + (1 - f(x))\cdot \delta_1$. Then $R|_{(-\infty, 1)} = R|_{(0, \infty)} = 0$ but $R(x)(\varphi) = f(x)\varphi(0) + (1-f(x))\varphi(1)$ is not necessarily zero.
\end{enumerate}
\end{example}

Hence, we have shown that $\SbasEloc$, $\SbasEploc$ and $\SbasEpi$ as well as $\SbasLloc$ and $\SbasLploc$ are only presheaves. At first sight this contrasts the fact that all Colombeau algebras mentioned in Section \ref{sec_colalg} are sheaves. A closer look at the testing procedure will make clear how this comes about.

As seen in the section on testing, moderateness and negligibility of $R \in \basEloc U$ are determined by examining the asymptotic behaviour of $p(R(\vec\varphi_k))$, where $p$ is a continuous seminorm of $C^\infty(U)$ and $(\vec\varphi_k)_k \in \SK U^\bN$ a sequence of smoothing kernels (simply called \emph{test object}). Because a basis of continuous seminorms of $C^\infty(U)$ is given by the $\sup$-norms on compact sets of all derivatives, only the local asmyptotic behaviour of $R(\vec\varphi_k)$ plays a role in the test. This in turn, by locality of $R$, only depends on the local asymptotic behaviour of $(\vec\varphi_k)_k$. Hence, one is naturally led to the definition of the following quotients.

\begin{definition}
For any locally convex space $E$ we denote by $\Cinfnets{U, E}$ the quotient algebra $C^\infty(U, E)^\bN / N(U,E)$, where $N(U,E)$ is the ideal consisting of all sequences $(f_k)_k \in C^\infty(U, E)^\bN$ such that $\forall x \in U$ $\exists$ an open neighborhood $V$ of $x$ contained in $U$ $\exists k_0 \in \bN$ $\forall k \ge k_0$: $f_k|_V = 0$. We write $(f_k)_k \sim (g_k)_k$ if $(f_k)_k - (g_k)_k \in N(U,E)$ and denote by $[(f_k)_k]$ the class of $(f_k)_k$ in $\Cinfnets{U,E}$.

In particular, we set $\Cinfnets U \coleq \Cinfnets{U, \bC}$ and $\SKnets U \coleq \widetilde{C^\infty}(U, \cD(U))$.
\end{definition}
Note that $\Cinfnets{U}$ is a sheaf, as is easily seen (actually, it is the sheafification of the presheaf of sequences of smooth functions which are globally eventually equal).
Because $(\vec\varphi_k)_k \sim (\vec\psi_k)_k$ implies $(R(\vec\varphi_k))_k \sim (R(\vec\psi_k))_k$, elements of $\basEloc U$ can be regarded as maps from $\SKnets U$ to $\Cinfnets U$:
\begin{definition}For $R \in \basEloc U$ and $[(\vec\varphi_k)_k] \in \SKlocnets{U}$ we set $R ( [ ( \vec\varphi_k)_k ] ) \coleq [ ( R ( \vec\varphi_k ) )_k ] \in \widetilde{C^\infty}(U)$.
\end{definition}

It is clear now that the usual tests for moderateness and negligibility, as for example those given in Section \ref{sec_testing}, can be formulated in terms of the values of $R \in \basEloc U$ on elements of $\SKnets U$: we can write $p(R(\vec\varphi_k)) = O(k^N) \Leftrightarrow p(R([(\vec\varphi_k)_k])) = O(k^N)$ because $p$ maps $\Cinfnets \Omega$ into the space of sequences of real numbers modulo eventually equal ones. Now the sheaf property in Colombeau algebras usually comes about because in their construction, one only tests with particular sequences of smoothing kernels, namely, such that for each $x \in \Omega$ the support of $\vec\varphi_k(x)$ is eventually contained in any neighborhood of $x$ \cite[Definition 3.3.5 (i)]{GKOS}:

\begin{definition}\label{def_localnet} A sequence $(\vec\varphi_k)_k \in \SK\Omega^\bN$ is called \emph{localizing} if $\forall x \in \Omega$ $\exists$ an open neighborhood $V$ of $x$ contained in $U$ $\forall r>0$ $\exists k_0 \in \bN$ $\forall k \ge k_0$ $\forall x \in V$: $\supp \vec\varphi_k(x) \subseteq \overline{B_r}(x)$.

We denote by $\SKloc U$ the set of all localizing sequences of smoothing kernels on $U$ and by $\SKlocnets U$ the quotient $\SKloc U / \sim$ (which is given by the elements of $\SKnets U$ having localizing representatives).
\end{definition}

We will now show that $\SSKlocnets$ is a sheaf; this will play a central role in proving the sheaf property of $\SbasEloc$. The first step is the presheaf structure:

\begin{theorem}\label{sk_restriction}
For any open sets $V \subseteq U$ open there exists a linear continuous map
$\rho^{SK}_{V,U}\colon \SK U \to \SK V$
such that the following holds.
\begin{enumerate}[(i)]
\item\label{9.0} Given $(\vec\varphi_k)_k \in \SKloc U$, $(\rho^{SK}_{V,U} \vec\varphi_k)_k \in \SKloc V$.
\item\label{9.1} For $(\vec\varphi_k)_k \in \SKloc{U}$, $[(\rho^{SK}_{V,U} \vec\varphi_k)_k] = [(\vec\varphi_k|_V)_k]$ in $\widetilde{C^\infty}(V, \cD(U))$. In particular, $(\vec\varphi_k)_k \sim 0$ implies $(\rho^{SK}_{V,U} \vec\varphi_k)_k \sim 0$, hence $\rho^{SK}_{V,U}$ induces a map $|_V \colon \SKlocnets U \to \SKlocnets V$.
\item\label{9.1.1} For $\vec\varphi \in \SK U$ and $f \in C^\infty(U)$, $\rho^{SK}_{V,U} ( f \cdot \vec\varphi) = f|_V \cdot \rho^{SK}_{V,U}(\vec\varphi)$.
\item\label{9.2} For $\vec\varphi, \vec \psi \in \SK U$ and $x \in V$, $\vec\varphi(x) = \vec\psi(x)$ implies $\rho^{SK}_{V,U}(\vec\varphi)(x) = \rho^{SK}_{V,U}(\vec\psi)(x)$. 
\item\label{9.3} For open sets $U_2 \subseteq U_1 \subseteq U$ we have $|_{U_2} \circ |_{U_1} = |_{U_2}$ on $\SKlocnets U$.
\end{enumerate}
\end{theorem}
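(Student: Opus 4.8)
The plan is to realise $\rho^{SK}_{V,U}$ as multiplication of a smoothing kernel by a smooth ``diagonal cutoff'' which forces the second variable into $V$ but leaves the kernel untouched near the diagonal. Fix once and for all $\theta\in C^\infty(\bR,[0,1])$ with $\theta\equiv1$ on $(-\infty,\tfrac14]$ and $\theta\equiv0$ on $[1,\infty)$, and for every open $V\subseteq\bR^n$ a function $\lambda_V\in C^\infty(V)$ with $0<\lambda_V(x)\le\tfrac12\operatorname{dist}(x,\bR^n\setminus V)$ for all $x\in V$ (with the convention that the right-hand side is $1$ when $V=\bR^n$; such a $\lambda_V$ exists by a standard smoothing from below of the continuous positive function on the right). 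Put
\[
 \chi_V(x,y)\coleq\theta\bigl(\abso{x-y}^2/\lambda_V(x)^2\bigr)\qquad(x\in V,\ y\in\bR^n),
\]
which is smooth on $V\times\bR^n$, equals $1$ for $\abso{x-y}\le\lambda_V(x)/2$, and for each $x$ has support (in $y$) contained in the compact ball $\overline{B_{\lambda_V(x)}}(x)\subseteq V$. Then set, for $\vec\varphi\in\SK U$ and $x\in V$,
\[
 \rho^{SK}_{V,U}(\vec\varphi)(x)\coleq\chi_V(x,\cdot)\cdot\vec\varphi(x)\in\cD(V),
\]
the product of $\vec\varphi(x)\in\cD(U)$ with $\chi_V(x,\cdot)$, which is supported in $\overline{B_{\lambda_V(x)}}(x)\cap\supp\vec\varphi(x)\Subset V$.

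For the regularity I would proceed as follows. Given a relatively compact open $W$ with $\overline W\subseteq V$, for \emph{every} $\vec\varphi$ and every $x\in\overline W$ the support of $\chi_V(x,\cdot)\vec\varphi(x)$ lies in the fixed compact set $K_W\coleq\bigcup_{x\in\overline W}\overline{B_{\lambda_V(x)}}(x)\subseteq V$, so $\rho^{SK}_{V,U}(\vec\varphi)|_W$ takes values in the Fréchet space $\cD_{K_W}(V)=\cD_{K_W}(U)$; joint smoothness there in $(\vec\varphi,x)$ follows from smoothness of the evaluation $\SK U\times U\to\cD(U)$, of $x\mapsto\chi_V(x,\cdot)\in C^\infty(U)$ (since $\chi_V|_{V\times U}\in C^\infty(V\times U)$) and of the multiplication $C^\infty(U)\times\cD(U)\to\cD(U)$, together with the fact that $\cD_{K_W}(U)\hookrightarrow\cD(U)$ is a topological embedding. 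Gluing over such $W$ and applying the exponential law \cite[3.12]{KM} then yields that $\rho^{SK}_{V,U}\colon\SK U\to\SK V$ is well defined, smooth and, being linear, continuous. Properties \eqref{9.1.1} and \eqref{9.2} are immediate from the pointwise-in-$x$ form of $\rho^{SK}_{V,U}$: $\rho^{SK}_{V,U}(f\vec\varphi)(x)=f(x)\,\rho^{SK}_{V,U}(\vec\varphi)(x)=(f|_V\cdot\rho^{SK}_{V,U}(\vec\varphi))(x)$, and $\vec\varphi(x)=\vec\psi(x)$ forces $\chi_V(x,\cdot)\vec\varphi(x)=\chi_V(x,\cdot)\vec\psi(x)$.

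Everything else rests on the single observation that $\rho^{SK}_{V,U}(\vec\varphi)(x)$ only shrinks the support of $\vec\varphi(x)$, and that $\rho^{SK}_{V,U}(\vec\varphi)(x)=\vec\varphi(x)$ as soon as $\supp\vec\varphi(x)\subseteq\overline{B_{\lambda_V(x)/2}}(x)$. For \eqref{9.0}, given $(\vec\varphi_k)_k\in\SKloc U$ and $x_0\in V$, intersect the localising neighbourhood of $x_0$ with $V$; there $\supp\rho^{SK}_{V,U}(\vec\varphi_k)(x)\subseteq\supp\vec\varphi_k(x)\subseteq\overline{B_r}(x)$, so the image sequence is localising on $V$. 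For \eqref{9.1}, given $x_0\in V$ pick an open $W'\ni x_0$ with $\overline{W'}$ compact, $\overline{W'}\subseteq V$ and $\overline{W'}$ contained in the localising neighbourhood of $x_0$; on $\overline{W'}$ the function $\lambda_V$ has a positive minimum $2r_0$, and by the localising property $\supp\vec\varphi_k(x)\subseteq\overline{B_{r_0}}(x)\subseteq\overline{B_{\lambda_V(x)/2}}(x)$ for all large $k$ and $x\in W'$, whence $\rho^{SK}_{V,U}(\vec\varphi_k)|_{W'}=\vec\varphi_k|_{W'}$; this is exactly $(\rho^{SK}_{V,U}\vec\varphi_k-\vec\varphi_k|_V)_k\in N(V,\cD(U))$. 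The ``in particular'' is even easier: if $(\vec\varphi_k)_k\in N(U,\cD(U))$ then multiplication by $\chi_V(x,\cdot)$ preserves local vanishing, so $(\rho^{SK}_{V,U}\vec\varphi_k)_k\in N(V,\cD(V))$; with linearity and \eqref{9.0} this makes $|_V\colon\SKlocnets U\to\SKlocnets V$ well defined. For \eqref{9.3}, with $U_2\subseteq U_1\subseteq U$ and a localising representative $(\vec\varphi_k)_k$, one has $\rho^{SK}_{U_2,U_1}\rho^{SK}_{U_1,U}(\vec\varphi_k)(x)=\chi_{U_2}(x,\cdot)\chi_{U_1}(x,\cdot)\vec\varphi_k(x)$ and $\rho^{SK}_{U_2,U}(\vec\varphi_k)(x)=\chi_{U_2}(x,\cdot)\vec\varphi_k(x)$ for $x\in U_2$; on a suitable relatively compact neighbourhood $W'$ of any $x_0\in U_2$ (with $\overline{W'}\subseteq U_2$ inside the localising neighbourhood) one gets $\supp\vec\varphi_k(x)\subseteq\overline{B_{\lambda_{U_1}(x)/2}}(x)$ for all large $k$ and $x\in W'$, since $\lambda_{U_1}$ has a positive minimum on $\overline{W'}$, so $\chi_{U_1}(x,\cdot)\vec\varphi_k(x)=\vec\varphi_k(x)$ there and the two expressions agree on $W'$; hence their difference lies in $N(U_2,\cD(U_2))$, which is the asserted identity on $\SKlocnets U$.

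The main difficulty is not any single estimate but keeping the two independent ``scales'' apart --- the localising radius that comes with $(\vec\varphi_k)_k$ versus the cutoff width $\lambda_V$ dictated by the geometry of $V$ --- and being careful that $\rho^{SK}_{V,U}$ is genuinely \emph{not} compatible with naive restriction on all of $\SK U$: the agreement in \eqref{9.1} holds only for localising sequences and only after passing to the quotients $\widetilde{C^\infty}(\cdot,\cdot)$. One must therefore keep track of which identities live in $\SK V$, which in $C^\infty(V,\cD(U))$, and which only in those quotients; relatedly, since $\cD(V)\hookrightarrow\cD(U)$ is \emph{not} a topological embedding, the target space $\cD(V)$ can only be reached through the Fréchet subspaces $\cD_{K_W}(V)$, which is the one genuinely delicate point in the functional-analytic part.
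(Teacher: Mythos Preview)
Your proof is correct and complete; the construction via a single diagonal cutoff $\chi_V(x,y)=\theta(\abso{x-y}^2/\lambda_V(x)^2)$ is a genuinely different route from the paper's. The paper instead covers $V$ by relatively compact open sets $W$, picks a subordinate partition of unity $(\chi_W)_W$ together with cutoffs $\theta_W\in C^\infty(U)$ with $\theta_W\equiv 1$ near $\supp\chi_W$ and $\supp\theta_W\subseteq V$, and sets
\[
(\rho^{SK}_{V,U}\vec\varphi)(x)=\sum_W \chi_W(x)\cdot(\vec\varphi(x)\cdot\theta_W)|_V.
\]
Continuity then follows because locally this is a finite sum of manifestly continuous linear maps, and the remaining properties are checked exactly as you do, using that for localising sequences eventually $\theta_W\equiv 1$ on $\supp\vec\varphi_k(x)$ for $x\in\supp\chi_W$.

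What each approach buys: your diagonal cutoff is more explicit and avoids the bookkeeping of a partition of unity; the key inequality $\supp\vec\varphi_k(x)\subseteq\overline{B_{\lambda_V(x)/2}}(x)$ for large $k$ is a clean single condition driving all of \eqref{9.0}--\eqref{9.3}. On the other hand, the paper's partition-of-unity construction ports verbatim to smooth manifolds (which is the paper's eventual target), whereas your construction relies on the Euclidean distance and on $\operatorname{dist}(x,\bR^n\setminus V)$, so it would need a Riemannian metric or similar auxiliary data to globalise. Your functional-analytic remark about reaching $\cD(V)$ only through the Fr\'echet pieces $\cD_{K_W}(V)$ is well taken and is implicitly also what makes the paper's local-finite-sum argument work.
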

\begin{proof}
Cover $V$ by open subsets $W$ such that $\overline{W}$ is compact and contained in $V$. Choose a partition of unity $(\chi_W)_W$ on $V$ subordinate to the $W$'s and functions $\theta_W \in C^\infty(U)$ for each $W$ such that $\theta_W \equiv 1$ on an open neighborhood of $\supp \chi_W$ and $\supp \theta_W \subseteq V$.

Define $\rho^{SK}_{V,U} \colon \SK U \to \SK V$ by
\[ (\rho^{SK}_{V,U}\vec\varphi)(x) \coleq \sum_W \chi_W(x) \cdot (\vec\varphi (x) \cdot \theta_W)|_V \qquad (x \in V). \]
Note that $(\vec\varphi(x) \cdot \theta_W)|_V \in \cD(V)$, hence $\rho^{SK}_{V,U}\vec\varphi \in \SK V$. \eqref{9.1.1} and \eqref{9.2} are clear from this definition.

Any $x \in V$ has an open neighborhood $X$ contained in $V$ and intersecting only finitely many $\supp \chi_W$, say those for $W=W_1, \dotsc, W_n$. Then we have
\begin{equation}\label{blah}
(\rho^{SK}_{V,U} \vec\varphi)(x) = \sum_{i=1}^n \chi_{W_i}(x) \cdot (\vec\varphi(x) \cdot \theta_{W_i})|_V \qquad(x \in X).
\end{equation}
Because $C^\infty(V, \cD(V))$ carries the projective topology with respect to all restrictions $|_X$ and \eqref{blah} is a sum of linear continuous maps, $\rho^{SK}_{V,U}$ is continuous.

\eqref{9.0}: Given $(\vec\varphi_k)_k \in \SKloc{U}$, from \eqref{blah} it is clear that $ \supp (\rho^{SK}_{V,U} \vec\varphi_k)(x) \subseteq \supp \vec\varphi_k(x)$ for all $x \in V$, thus $(\rho^{SK}_{V,U} \vec\varphi_k)_k$ is localizing.

Because each $\supp \chi_W$ is compact we can for each $i=1 \dotsc n$ choose $k_i \in \bN$ such that for $k \ge k_i$ and $x \in \supp \chi_{W_i}$, $\theta_{W_i} \equiv 1$ on $\supp \vec\varphi_k(x)$. Consequently, for $k$ larger than all $k_i$ we have $(\rho^{SK}_{V,U} \vec\varphi_k)(x) = \sum_{i=1}^n \chi_{W_i} \cdot \vec\varphi_k(x) = \vec\varphi_k(x)$ for $x \in X$, which is \eqref{9.1}.

For \ref{9.3} we only have to note that for all open sets $X$ which are relatively compact in $U_2$, by (i) we have $(\rho^{SK}_{U_2,U_1} \rho^{SK}_{U_1,U} \vec\varphi_k)|_X = (\rho^{SK}_{U_1,U}\vec\varphi_k)|_X = \vec\varphi_k|_X = (\rho^{SK}_{U_2,U}\vec\varphi_k)|_X$ for large $k$.
\end{proof}

\begin{proposition}\label{sk_sheaf}
 $\StSKloc$ is a sheaf of $C^\infty$-modules on $\Omega$.
\end{proposition}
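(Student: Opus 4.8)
The plan is to check the two sheaf axioms for the presheaf $U \mapsto \SKlocnets{U}$, whose presheaf structure is already supplied by Theorem~\ref{sk_restriction}: the maps $|_V$ are well defined on $\SKlocnets{\,\cdot\,}$ by \eqref{9.1}, functorial by \eqref{9.3}, and $C^\infty(U)$-linear by \eqref{9.1.1}. Since $C^\infty$ is a sheaf of rings, once $\SSKlocnets$ is shown to be a sheaf of abelian groups the module statement is automatic. (Throughout one uses that an open subset of $\bR^n$ is paracompact and locally compact, so locally finite open refinements and subordinate partitions of unity exist.)

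For separation, suppose $s = [(\vec\varphi_k)_k] \in \SKlocnets{U}$, with $(\vec\varphi_k)_k$ localizing, restricts to $0$ on every member $U_i$ of an open cover of $U$. Then $(\rho^{SK}_{U_i,U}\vec\varphi_k)_k \in N(U_i,\cD(U_i))$, which together with \eqref{9.1} gives $(\vec\varphi_k|_{U_i})_k \in N(U_i,\cD(U))$ for each $i$. As membership in $N(\,\cdot\,,\cD(U))$ is a local condition on the base and the $U_i$ cover $U$, we conclude $(\vec\varphi_k)_k \in N(U,\cD(U))$, i.e.\ $s = 0$; by linearity this is the separation axiom.

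For gluing, let $U = \bigcup_i U_i$ be an open cover and $s_i = [(\vec\varphi^i_k)_k] \in \SKlocnets{U_i}$ sections with localizing representatives agreeing on all overlaps. I would glue them with a partition of unity: pick a locally finite open refinement $\{V_j\}$ of $\{U_i\}$ with $V_j \subseteq U_{i(j)}$ and a subordinate partition of unity $(\chi_j)_j$, and set $\vec\psi_k \coleq \sum_j \chi_j \cdot \rho^{SK}_{V_j,U_{i(j)}}(\vec\varphi^{i(j)}_k)$, each summand (supported in $\supp\chi_j \subseteq V_j$) trivially extended to an element of $\SK U$; local finiteness makes $\vec\psi_k \in \SK U$. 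Two verifications remain. First, $(\vec\psi_k)_k$ is localizing: near any point only finitely many $j$ contribute, and for each the support of the $j$-th summand at $x$ lies in $\supp\vec\varphi^{i(j)}_k(x)$ by \eqref{9.0}, which shrinks uniformly since $(\vec\varphi^{i(j)}_k)_k$ is localizing. Second, $[(\vec\psi_k)_k]|_{U_i} = s_i$ for each $i$: working near a point $x \in U_i$, shrink to a neighbourhood $W \subseteq U_i$ that meets only finitely many $V_j$, lies in $V_j$ precisely when $x \in V_j$, and otherwise misses $\supp\chi_j$; there $\sum_{j:\,x\in V_j}\chi_j \equiv 1$, so using \eqref{9.1} to replace each $\rho^{SK}_{V_j,U_{i(j)}}(\vec\varphi^{i(j)}_k)$ by $\vec\varphi^{i(j)}_k$ near $x$ for large $k$, and the overlap compatibility $s_{i(j)}|_{U_i\cap U_{i(j)}} = s_i|_{U_i\cap U_{i(j)}}$ (again via \eqref{9.1}) to replace $\vec\varphi^{i(j)}_k$ by $\vec\varphi^i_k$ near $x$ for large $k$ — finitely many replacements, simultaneously valid on a small enough neighbourhood — one obtains $\vec\psi_k = \vec\varphi^i_k$ near $x$ for all large $k$. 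Covering $U_i$ by such $W$ gives $(\vec\psi_k|_{U_i})_k \sim (\vec\varphi^i_k)_k$, hence (by \eqref{9.1}, together with the fact that vanishing in $\cD(U)$ of an element of $\cD(U_i)$ is vanishing in $\cD(U_i)$) $[(\rho^{SK}_{U_i,U}\vec\psi_k)_k] = s_i$ in $\SKlocnets{U_i}$, as required.

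The main obstacle is the second verification in the gluing step: arranging the shrinking of neighbourhoods so that the finitely many local identities coming from \eqref{9.1} and from the overlap compatibilities hold simultaneously for all sufficiently large $k$, and keeping track of which $\chi_j$ remain nonzero near $x$ so that their partial sum is exactly $1$ there. Everything else — separation, the localizing property of $(\vec\psi_k)_k$, and the $C^\infty$-module compatibility (which is just \eqref{9.1.1}) — follows directly from Theorem~\ref{sk_restriction}.
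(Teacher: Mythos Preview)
Your proof is correct and follows essentially the same route as the paper: check separation via Theorem~\ref{sk_restriction}\,\eqref{9.1}, then glue using a subordinate partition of unity and verify the restrictions again via \eqref{9.1}. The only notable difference is that the paper's gluing formula is simpler: it sets $\vec\varphi_k(x) \coleq \sum_\lambda \chi_\lambda(x)\cdot \vec\varphi_k^\lambda(x)$ directly, viewing each $\vec\varphi_k^\lambda(x) \in \cD(U_\lambda)$ as an element of $\cD(U)$ by trivial extension, without first passing through a refinement $\{V_j\}$ or applying the maps $\rho^{SK}_{V_j,U_{i(j)}}$; this shortens the second verification because one fewer layer of \eqref{9.1} is needed.
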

\begin{proof}
Let $U \subseteq \Omega$ be open, $(U_\lambda)_\lambda$ an open cover on $U$ and $[(\vec\varphi_k)_k] \in \SKlocnets{U}$. Supposing that $[ (\vec\varphi_k)_k ]|_{U_\lambda} = 0$ for each $\lambda$, we have to show that $[(\vec\varphi_k)_k]=0$. Any $x \in U$ is contained in some $U_\lambda$ and by Theorem \ref{sk_restriction} \eqref{9.1} has an open neighborhood $W$ in $U_\lambda$ such that $\vec\varphi_k|_W = (\rho^{SK}_{U_\lambda,U} \vec\varphi_k)|_W$ for large $k$. Choosing $W$ such that $\overline{W}$ is compact and contained in $U_\lambda$, $(\rho^{SK}_{U_\lambda, U} \vec\varphi_k)|_W = 0$ for large $k$ by assumption, which gives the claim.

Now let $[(\vec\varphi_k^\lambda)_k] \in \SKlocnets{U_\lambda}$ be given for each $\lambda$, satisfying $[ (\vec\varphi_k^\lambda)_k]|_{U_\lambda \cap U_\mu} = [ (\vec\varphi_k^\mu)_k]|_{U_\lambda \cap U_\mu}$ $\forall \lambda,\mu$. Let $(\chi_\lambda)_\lambda$ be a partition of unity on $U$ subordinate to $(U_\lambda)_\lambda$.
We define $\vec\varphi_k \in \SK U$ by
\begin{equation}\label{def_skglue}
 \vec\varphi_k(x) \coleq \sum_\lambda \chi_\lambda(x) \cdot \vec\varphi^{\lambda}_k(x)\qquad (x \in U).
\end{equation}
We claim that $(\vec\varphi_k)_k$ is localizing: take $x_0 \in U$ and an open, relatively compact neighborhood $V$ of $x_0$ intersecting only finitely many $\supp \chi_\lambda$, namely those for $\lambda$ in some finite index set $F$. Given $r>0$, for each $\lambda \in F$ choose $k_\lambda$ such that $\forall x \in \supp \chi_\lambda \cap V$ $\forall k \ge k_\lambda$: $\supp \vec\varphi_k^{\lambda}(x)\subseteq \overline{B_r}(x)$ . Then for $k \ge \max_{\lambda \in F} k_\lambda$ and $x \in V$, $\supp \vec\varphi(x) \subseteq \overline{B_r}(x)$ also.

Now we are going to show that $[(\vec\varphi_k)_k]|_{U_\mu} = [(\vec\varphi_k^\mu)_k]$. With $x_0,V,F$ as above, suppose additionally that $x_0 \in U_\mu$ and $V$ is relatively compact in $U_\mu$. By Theorem \ref{sk_restriction} \eqref{9.1} $(\rho^{SK}_{U_\mu, U} \vec\varphi_k)|_V = \vec\varphi_k|_V$ for large $k$; because
$\vec\varphi_k^\lambda(x) = \vec\varphi_k^\mu(x)$ for $x \in \supp \chi_\lambda \cap V$ and large $k$ we have
\[ (\rho^{SK}_{U_\mu, U} \vec\varphi_k )(x) = \sum_{\lambda \in F} \chi_\lambda(x) \cdot \vec\varphi_k^\mu(x) = \vec\varphi_k^\mu(x) \qquad (x \in V) \]
for large $k$, which is what we needed to show.
\end{proof}
The following result about extension of smoothing kernels follows directly from the fact that $\StSKloc$ is a sheaf of $C^\infty$-modules and $C^\infty$ is a fine sheaf.
 \begin{corollary}\label{sk_soft}
Given $V \subseteq U$ open, $W$ open with $\overline{W} \subseteq V$ and $[(\vec\varphi_k)_k]$ in $\SKlocnets V$, there exists $[(\vec\psi_k)_k] \in \SKlocnets U$ such that $[(\vec\psi_k)_k]|_W = [(\vec\varphi_k)_k]|_W$.
\end{corollary}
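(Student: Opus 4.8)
The corollary asserts exactly that $\StSKloc$ is a \emph{soft} sheaf, so I would argue in one of two ways. Abstractly: by Proposition~\ref{sk_sheaf} $\StSKloc$ is a sheaf of modules over $C^\infty$, and on the paracompact manifold $\Omega$ (an open subset of $\bR^n$) the sheaf $C^\infty$ admits smooth partitions of unity, hence is fine, hence soft; a sheaf of modules over a soft sheaf of rings on a paracompact Hausdorff space is again soft, and for a soft sheaf every section over a closed subset extends over the ambient space. Restricting the given class $[(\vec\varphi_k)_k]$ to the set $\overline W$ (closed in $U$, and contained in $V$), softness of $\StSKloc|_U$ produces an extension over $U$ which coincides with $[(\vec\varphi_k)_k]$ on a neighbourhood of $\overline W$, in particular on $W$. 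Since one may not want to invoke sheaf-theoretic softness as a black box, I would also give the underlying partition-of-unity construction explicitly; it is short.

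For the explicit argument I would first pick, using that $\overline W$ is closed in $U$, that $V$ is open and $\overline W\subseteq V$, a cutoff $\chi\in C^\infty(U)$ with $0\le\chi\le1$, $\chi\equiv 1$ on a neighbourhood of $\overline W$, and $\supp\chi\subseteq V$ (the support being taken in $U$, hence closed in $U$); such $\chi$ exists by a standard smooth bump/Urysohn argument on $U$. I then fix a localizing representative $(\vec\varphi_k)_k\in\SKloc V$ of the given class and define $\vec\psi_k\colon U\to\cD(U)$ as the extension by zero to $U$ of $\chi|_V\cdot\vec\varphi_k\in\SK V$. This is legitimate: $\supp(\chi|_V\cdot\vec\varphi_k)\subseteq\supp\chi$, which is closed in $U$ and contained in $V$, so the smooth map $\chi|_V\cdot\vec\varphi_k$ on $V$ and the zero map on $U\setminus\supp\chi$ agree on the overlap and, smoothness being a local property, glue to a map $\vec\psi_k\in C^\infty(U,\cD(U))=\SK U$.

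Next I would verify $(\vec\psi_k)_k\in\SKloc U$: at a point of $U\setminus\supp\chi$ the net vanishes on a whole neighbourhood, so the localizing condition is vacuous there, while at a point $x_0\in\supp\chi\subseteq V$ one chooses a neighbourhood in $V$ witnessing the localizing property of $(\vec\varphi_k)_k$ and uses $\supp\vec\psi_k(x)=\supp(\chi(x)\vec\varphi_k(x))\subseteq\supp\vec\varphi_k(x)$ for $x$ in that neighbourhood. Hence $[(\vec\psi_k)_k]\in\SKlocnets U$. Finally, because $\chi\equiv1$ on $W$ one has $\vec\psi_k(x)=\vec\varphi_k(x)$ for all $x\in W$ and all $k$, so by Theorem~\ref{sk_restriction}\eqref{9.1} the sequences $\rho^{SK}_{W,U}\vec\psi_k$ and $\rho^{SK}_{W,V}\vec\varphi_k$ have the same class $[(\vec\varphi_k|_W)_k]$; that is, $[(\vec\psi_k)_k]|_W=[(\vec\varphi_k)_k]|_W$ in $\SKlocnets W$, which is the assertion.

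I do not expect a genuine obstacle here, since the statement is precisely the softness of a sheaf of $C^\infty$-modules; the only places calling for a little care are the legitimacy of the trivial extension (which rests on $\supp\chi$ being closed in $U$ and contained in $V$) and the passage between the pointwise restriction $\vec\psi_k|_W$ and the restriction map $\rho^{SK}_{W,U}$ of Theorem~\ref{sk_restriction}, both of which are handled by the cited results.
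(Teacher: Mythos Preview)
Your proof is correct. Both your abstract softness argument and your explicit cutoff-and-extend-by-zero construction are valid; the paper itself remarks that the corollary ``follows directly from the fact that $\StSKloc$ is a sheaf of $C^\infty$-modules and $C^\infty$ is a fine sheaf'', so your first paragraph matches the paper's stated viewpoint exactly.

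The explicit arguments differ slightly. The paper picks an auxiliary $(\vec\psi_k^0)_k \in \SKloc U$, forms the two local sections $[(\vec\psi_k^0)_k]|_{U\setminus \overline{W'}}$ and $(1-\chi)\cdot[(\vec\psi_k^0)_k]|_V + \chi|_V\cdot[(\vec\varphi_k)_k]$ on $V$, checks they agree on the overlap, and then invokes the sheaf gluing of Proposition~\ref{sk_sheaf} to produce the global class. Your construction is the special case $\vec\psi_k^0 = 0$ carried out by hand: you simply extend $\chi\cdot\vec\varphi_k$ by zero and verify directly that the result lies in $\SKloc U$, bypassing the appeal to Proposition~\ref{sk_sheaf}. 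This is marginally more elementary and self-contained; the paper's version, on the other hand, illustrates how the corollary is a formal consequence of the sheaf and module structure already established. Neither approach has a real advantage over the other here.
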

\begin{proof}
Choose $(\vec\psi_k^0)_k \in \SKloc U$ and $\chi \in C^\infty(U)$ with $\chi \equiv 1$ on $W$ and $\supp \chi \subseteq W'$, where $W'$ is an open set such that $\overline{W} \subseteq W' \subseteq \overline{W'} \subseteq V$.
Then, $[ (\vec\psi_k^0)_k ]|_{U \setminus W'}$ and $ ( (1-\chi) \cdot [(\vec\psi^0_k)_k])|_V + \chi|_V \cdot [(\vec\varphi_k)_k]$ coincide on $V \cap U \setminus W' = V \setminus W'$, hence by Proposition \ref{sk_sheaf} there exists $[(\vec\psi_k)_k] \in \SKlocnets U$ such that $[(\vec\psi_k)_k]|_W = [(\vec\varphi_k)_k]|_W$.
\end{proof}

We now can show that elements of $\SbasEloc$, evaluated on localizing sequences of smoothing kernels, form a sheaf. This is a preliminary step to showing the sheaf property for Colombeau algebras, before any testing is involved. The corresponding space is the following.
\begin{definition}We define 
\[ \cN(U) \coleq \{ R \in \basEloc U\ |\ \forall [(\vec\varphi_k)_k] \in \SKlocnets U: R([(\vec\varphi_k)_k]) = 0\ \textrm{ in }\Cinfnets U\} \]
and set
$\basElocq U \coleq \basEloc U / \cN(U)$. We write $R \sim S$ for $R-S \in \cN(U)$.
\end{definition}

\begin{proposition} Restriction descends to $\basElocq U$ by setting $[R]|_V \coleq [R|_V]$, hence $\SbasElocq$ is a presheaf of vector spaces on $\Omega$.
\end{proposition}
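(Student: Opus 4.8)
The plan is to isolate the one nontrivial point: \emph{if $R \in \cN(U)$ then $R|_V \in \cN(V)$} for every open $V \subseteq U$. Granting this, the prescription $[R]|_V \coleq [R|_V]$ does not depend on the chosen representative of $[R] \in \basElocq U$, hence is well defined on $\basElocq U$; it is linear because $R \mapsto R|_V$ is linear by Theorem \ref{eloc_restriction}; it restricts to the identity for $V = U$ (since $R$ itself satisfies condition (*) by locality, so $R|_U = R$ by uniqueness); and it satisfies $([R]|_V)|_W = [R]|_W$ for $W \subseteq V \subseteq U$ because the corresponding identity $(R|_V)|_W = R|_W$ already holds on representatives, again by Theorem \ref{eloc_restriction}. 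These are exactly the presheaf axioms, so $\SbasElocq$ becomes a presheaf of vector spaces.

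To prove the claim, fix $R \in \cN(U)$ and $[(\vec\varphi_k)_k] \in \SKlocnets V$; I must show that $R|_V(\vec\varphi_k)$ vanishes, for all large $k$, on a neighborhood of each point of $V$, i.e.\ that $R|_V([(\vec\varphi_k)_k]) = 0$ in $\Cinfnets V$. Fix $x_0 \in V$ and choose nested open sets $x_0 \in W_2 \subseteq \overline{W_2} \subseteq W \subseteq \overline{W} \subseteq V$. By Corollary \ref{sk_soft} there is $[(\vec\psi_k)_k] \in \SKlocnets U$ with $[(\vec\psi_k)_k]|_W = [(\vec\varphi_k)_k]|_W$ in $\SKlocnets W$. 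Unwinding this equality of classes via Theorem \ref{sk_restriction}\eqref{9.1}, and using that both sequences are localizing, so that near $x_0$ the test functions $\vec\varphi_k(y)$ and $\vec\psi_k(y)$ are eventually supported inside $W$ (hence lie in $\cD(W) \subseteq \cD(V)$), one obtains an index $k_1$ with $\vec\psi_k|_{W_2} = \vec\varphi_k|_{W_2}$ for all $k \ge k_1$.

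Now apply condition (*) of Theorem \ref{eloc_restriction} with the open set $W_2$ (note $\overline{W_2} \subseteq V$), $\vec\varphi = \vec\varphi_k \in \SK V$ and $\vec\psi = \vec\psi_k \in \SK U$: since these agree on $W_2$ for $k \ge k_1$, we get $R|_V(\vec\varphi_k)|_{W_2} = R(\vec\psi_k)|_{W_2}$ for $k \ge k_1$. On the other hand $R \in \cN(U)$ and $[(\vec\psi_k)_k] \in \SKlocnets U$ give $(R(\vec\psi_k))_k \in N(U)$, so there are an open set $W_3 \ni x_0$ with $\overline{W_3} \subseteq W_2$ and an index $k_2$ with $R(\vec\psi_k)|_{W_3} = 0$ for $k \ge k_2$. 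Combining the two, $R|_V(\vec\varphi_k)|_{W_3} = 0$ for all $k \ge \max(k_1, k_2)$, which is precisely the local vanishing required at $x_0$. As $x_0$ and $[(\vec\varphi_k)_k]$ were arbitrary, $R|_V \in \cN(V)$.

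I expect the only real friction to lie in the middle step: passing from the equality of classes $[(\vec\psi_k)_k]|_W = [(\vec\varphi_k)_k]|_W$ — which a priori only says that suitable $\rho^{SK}$-representatives agree locally and eventually — to a genuine eventual equality $\vec\psi_k|_{W_2} = \vec\varphi_k|_{W_2}$ of maps, together with the minor bookkeeping that the two sequences nominally take values in the different spaces $\cD(U)$ and $\cD(V)$. Both issues dissolve by invoking the localizing property, which confines all relevant supports to $W$ near $x_0$ for large $k$. Everything else is a direct assembly of Corollary \ref{sk_soft}, condition (*) of Theorem \ref{eloc_restriction}, and the definition of $\cN$.
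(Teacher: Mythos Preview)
Your proof is correct and follows essentially the same route as the paper's: extend the localizing test object from $V$ to $U$ via Corollary \ref{sk_soft}, compare $R|_V(\vec\varphi_k)$ with $R(\vec\psi_k)$ near the given point using condition (*) of Theorem \ref{eloc_restriction}, and then invoke $R \in \cN(U)$. You are simply more explicit about the bookkeeping --- the nested sets $W_3 \subseteq W_2 \subseteq W$ and the passage from equality of classes to genuine eventual pointwise equality --- where the paper compresses these steps into a single sentence (implicitly using compactness of $\overline{W}$ to upgrade local eventual vanishing to vanishing on all of $W$).
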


\begin{proof}
We need to show that $R \sim 0$ implies $R|_V \sim 0$. Let $[(\vec\varphi_k)_k] \in \SKlocnets{V}$. Given any $x \in V$, let $W$ be an open neighborhood of $x$ with $\overline{W}$ compact and contained in $V$. Using Corollary \ref{sk_soft} choose $(\vec\psi_k)_k \in  \SKloc U$ such that $\vec\psi_k|_W = \vec\varphi_k|_W$ for large $k$. 
Then $R|_V(\vec\varphi_k)|_W = R(\vec\psi_k)|_W$, which eventually vanishes by assumption. This means $R|_V(\vec\varphi_k)\sim 0$ and hence $R|_V \sim 0$.
\end{proof}

The following shows that elements of $\basElocq U$ are in fact presheaf morphisms.
\begin{lemma}\label{restrpresheaf}
Let $V \subseteq U$ be open, $R \in \basEloc{U}$ and $[(\vec\varphi_k)_k] \in \SKloc{U}$. Then
\[ R([(\vec\varphi_k)_k])|_V = R|_V ( [ (\vec\varphi_k)_k ]|_V ). \]
\end{lemma}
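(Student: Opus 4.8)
The plan is to unravel both sides using the characterization~(*) of the restriction map from Theorem~\ref{eloc_restriction} and to reduce everything to equality on small, relatively compact open sets, where the two smoothing kernels involved can be made to agree identically for large $k$. Recall that both sides are elements of $\Cinfnets V$, and since $\Cinfnets V$ is a sheaf it suffices to check the claimed equality after restricting to each member $W$ of a cover of $V$ by open sets with $\overline{W}$ compact and $\overline{W} \subseteq V$. Fix such a $W$. First I would produce, using $\rho^{SK}_{V,U}$ from Theorem~\ref{sk_restriction} or simply a cutoff as in the proof of Theorem~\ref{eloc_restriction}, a sequence $(\vec\psi_k)_k \in \SK V^\bN$ (take $\vec\psi_k \coleq \rho^{SK}_{V,U}\vec\varphi_k$, or $\vec\psi_k = (\rho_W \cdot \vec\varphi_k)$ restricted to $V$) with $\vec\psi_k|_W = \vec\varphi_k|_W$ for all large $k$; by Theorem~\ref{sk_restriction}~\eqref{9.0}--\eqref{9.1} this $(\vec\psi_k)_k$ is localizing and $[(\vec\psi_k)_k]|_W = [(\vec\varphi_k)_k]|_W$, so $R|_V([(\vec\varphi_k)_k])$ agrees with $R|_V([(\vec\psi_k)_k])$ near $W$, and by definition $R|_V([(\vec\psi_k)_k]) = [(R|_V(\vec\psi_k))_k]$.

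The heart of the matter is then the pointwise identity $R|_V(\vec\psi_k)|_W = R(\vec\varphi_k)|_W$ for large $k$. This is exactly what property~(*) of Theorem~\ref{eloc_restriction} gives, once we know $\vec\psi_k|_W = \vec\varphi_k|_W$: indeed (*) says that for any open $W$ with $\overline{W}\subseteq V$, any $\vec\psi \in \SK V$ and $\vec\varphi \in \SK U$ with $\vec\psi|_W = \vec\varphi|_W$ we have $R|_V(\vec\psi)|_W = R(\vec\varphi)|_W$. Applying this for each $k$ beyond the threshold where the two kernels agree on $W$, we get $[(R|_V(\vec\psi_k))_k]|_W = [(R(\vec\varphi_k))_k]|_W$ in $\Cinfnets W$. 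Chaining the equalities: $R([(\vec\varphi_k)_k])|_V|_W = [(R(\vec\varphi_k))_k]|_W = [(R|_V(\vec\psi_k))_k]|_W = R|_V([(\vec\psi_k)_k])|_W = R|_V([(\vec\varphi_k)_k]|_V)|_W$, where in the last step I use $[(\vec\psi_k)_k] = [(\vec\varphi_k|_V)_k] = [(\vec\varphi_k)_k]|_V$ locally on $W$ from Theorem~\ref{sk_restriction}~\eqref{9.1}, and in the first step that restriction of classes in $\Cinfnets U$ to $V$ and then to $W$ is just restriction to $W$.

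The main obstacle, such as it is, is bookkeeping rather than mathematics: one must be careful that the "for large $k$" thresholds, the passage between $\SK$-level statements and $\widetilde{C^\infty}$-level statements, and the two-step restriction $|_V$ followed by $|_W$ are all handled consistently, and that the cover of $V$ used is by sets $W$ to which (*) legitimately applies (i.e. $\overline{W}$ compact in $V$, so that the cutoffs $\rho_W$, $\theta_W$ exist). Once the cover is fixed and the kernel $(\vec\psi_k)_k$ is chosen per $W$ as above, every individual equality is either the defining property~(*) of $R|_V$, the definition of evaluation of elements of $\basEloc{}$ on sequence classes, or part~\eqref{9.1} of Theorem~\ref{sk_restriction}, and the sheaf property of $\Cinfnets V$ glues the local identities into the global one. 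Hence no genuinely new construction is needed, and the lemma follows.
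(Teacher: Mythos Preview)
Your proposal is correct and follows essentially the same route as the paper: pick a small $W$ with $\overline{W}$ compact in $V$, set $\vec\psi_k = \rho^{SK}_{V,U}\vec\varphi_k$, use Theorem~\ref{sk_restriction}\,\eqref{9.1} to get $\vec\psi_k|_W = \vec\varphi_k|_W$ for large $k$, and then apply property~(*) from Theorem~\ref{eloc_restriction} to obtain $R|_V(\vec\psi_k)|_W = R(\vec\varphi_k)|_W$. The paper's proof is just a terser version of the same argument, omitting the explicit mention of the sheaf property of $\SCinfnets$ that you spell out.
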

\begin{proof}
Let $x \in V$ and take an open neighborhood $W$ of $x$ such that $\overline{W}$ is compact and contained in $V$. By Theorem \ref{sk_restriction} \eqref{9.1}, $\rho^{SK}_{V,U}(\vec\varphi_k)|_W = \vec\varphi_k|_W$ for large $k$. By definition, $R|_V ( \rho^{SK}_{V,U}(\vec\varphi_k))|_W = R(\vec\varphi_k)|_W$ for large $k$, which is the claim.
\end{proof}

\begin{theorem}
$\SbasElocq$ (as well as $\SbasEploc$ and $\SbasEpi$) is a sheaf of vector spaces on $\Omega$.
\end{theorem}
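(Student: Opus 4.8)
The statement is that $\SbasElocq$ is a sheaf (and similarly $\SbasEploc$, $\SbasEpi$); since we already know it is a presheaf, there are two axioms to verify: \emph{separation} (locality) and \emph{gluing}. Throughout I would work with a fixed open $U \subseteq \Omega$, an open cover $(U_\lambda)_\lambda$, and use freely that $\StSKloc$ is a sheaf (Proposition \ref{sk_sheaf}), that restriction of smoothing kernels exists (Theorem \ref{sk_restriction}), the extension result Corollary \ref{sk_soft}, and that elements of $\basElocq U$ commute with restriction (Lemma \ref{restrpresheaf}).

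\emph{Separation.} Suppose $R \in \basEloc U$ satisfies $[R]|_{U_\lambda} = 0$ in $\basElocq{U_\lambda}$ for all $\lambda$, i.e.\ $R|_{U_\lambda} \in \cN(U_\lambda)$. I must show $R \in \cN(U)$, i.e.\ $R([(\vec\varphi_k)_k]) = 0$ in $\Cinfnets U$ for every $[(\vec\varphi_k)_k] \in \SKlocnets U$. By definition of the ideal in $\Cinfnets U$ this is a local statement: it suffices to show that for each $x \in U$ there is a neighborhood $W$ of $x$ with $R(\vec\varphi_k)|_W = 0$ for large $k$. Pick $\lambda$ with $x \in U_\lambda$ and, using Theorem \ref{sk_restriction}\eqref{9.1}, an open $W \ni x$ with $\overline W$ compact in $U_\lambda$ and $\rho^{SK}_{U_\lambda,U}(\vec\varphi_k)|_W = \vec\varphi_k|_W$ for large $k$. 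By Lemma \ref{restrpresheaf}, $R(\vec\varphi_k)|_W = R|_{U_\lambda}(\rho^{SK}_{U_\lambda,U}\vec\varphi_k)|_W$ for large $k$, and the right-hand side vanishes eventually because $R|_{U_\lambda} \in \cN(U_\lambda)$ applied to the localizing sequence $(\rho^{SK}_{U_\lambda,U}\vec\varphi_k)_k \in \SKloc{U_\lambda}$ (Theorem \ref{sk_restriction}\eqref{9.0}). This gives $R \sim 0$.

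\emph{Gluing.} Given $R_\lambda \in \basEloc{U_\lambda}$ with $[R_\lambda]|_{U_\lambda \cap U_\mu} = [R_\mu]|_{U_\lambda \cap U_\mu}$ for all $\lambda,\mu$, I need to build $R \in \basEloc U$ with $[R]|_{U_\lambda} = [R_\lambda]$. The naive approach "patch the $R_\lambda$ with a partition of unity" fails because the $R_\lambda$ take values in $C^\infty(U_\lambda)$ but their \emph{domains} $\SK{U_\lambda}$ are genuinely different, and there is no partition-of-unity operation on the functionals themselves that respects locality. The right construction is to define $R$ directly as a map on $\SK U$: given $\vec\varphi \in \SK U$, set $R(\vec\varphi)$ to be the smooth function on $U$ whose germ at each $x$ is prescribed locally via the $R_\lambda$ and the restriction-of-smoothing-kernels maps. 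Concretely, for $x \in U$ choose $\lambda$ with $x \in U_\lambda$ and an open $W \ni x$ relatively compact in $U_\lambda$, and declare $R(\vec\varphi)|_W \coleq R_\lambda(\vec\psi^\lambda)|_W$ where $\vec\psi^\lambda \in \SK{U_\lambda}$ agrees with $\vec\varphi$ on $W$ — this is exactly the mechanism of Theorem \ref{eloc_restriction}, reading that proof as producing, from a compatible family of local data, a globally defined element. One checks independence of the choices of $W,\lambda,\vec\psi^\lambda$ using locality of the $R_\lambda$ together with the compatibility hypothesis (here one must pass through the $\cN$-quotient: the $R_\lambda$ agree only up to $\cN$ on overlaps, so the resulting germs agree only eventually along localizing sequences — which is all that is needed since the target is $\Cinfnets{}$, not $C^\infty$ pointwise). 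Smoothness of $R$ in $\vec\varphi$ follows as in Theorem \ref{eloc_restriction} from the local formula and the convenient-calculus criterion \cite[3.8]{KM}; locality of $R$ is immediate from the local formula. Finally $[R]|_{U_\lambda} = [R_\lambda]$ follows from Lemma \ref{restrpresheaf} and the construction. The sub-presheaves $\SbasEploc$ and $\SbasEpi$ are handled identically, since Theorem \ref{eloc_restriction} and all the lemmas above preserve point-locality and point-independence.

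\emph{Main obstacle.} The delicate point is the gluing step, specifically reconciling two mismatches at once: (a) the different domains $\SK{U_\lambda}$ — resolved by working germ-wise via the $\rho^{SK}$ maps of Theorem \ref{sk_restriction} and mimicking Theorem \ref{eloc_restriction}'s patching of a compatible family; and (b) the fact that compatibility is only modulo $\cN$ — resolved by noting that the test only ever sees values on $\SKlocnets{}$, so "eventually equal along every localizing sequence" is the correct and available notion of agreement, and the sheaf property of $\StSKloc$ plus Corollary \ref{sk_soft} supply enough localizing kernels to make this rigorous. Verifying smoothness and well-definedness of the glued $R$ is then routine given the template already established in the proof of Theorem \ref{eloc_restriction}.
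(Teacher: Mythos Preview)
Your separation argument is correct and matches the paper's. The gluing argument, however, has a genuine gap.

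You need to produce an honest representative $R \in \basEloc U$, i.e.\ a smooth map $\SK U \to C^\infty(U)$, and only afterwards pass to the quotient. Your germ-wise prescription $R(\vec\varphi)|_W \coleq R_\lambda(\vec\psi^\lambda)|_W$ is not well-defined for a \emph{fixed} $\vec\varphi$: if $x \in U_\lambda \cap U_\mu$ and you compute via $\lambda$ and via $\mu$, the two answers $R_\lambda(\vec\psi^\lambda)|_W$ and $R_\mu(\vec\psi^\mu)|_W$ need not coincide as smooth functions. The hypothesis $[R_\lambda]|_{U_\lambda\cap U_\mu} = [R_\mu]|_{U_\lambda\cap U_\mu}$ only says they agree \emph{eventually along every localizing sequence}, which is vacuous for a single kernel. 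You acknowledge this (``the target is $\Cinfnets{}$, not $C^\infty$ pointwise''), but that remark does not rescue the construction: an element of $\basElocq U$ is by definition a class of genuine maps $\SK U \to C^\infty(U)$, not a map $\SKlocnets U \to \Cinfnets U$, and your patched object is only the latter.

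The paper's fix is precisely the approach you dismissed as naive. One chooses a partition of unity $(\chi_\lambda)_\lambda$ subordinate to $(U_\lambda)_\lambda$ and sets
\[
R(\vec\varphi) \coleq \sum_\lambda \chi_\lambda \cdot R_\lambda\bigl(\rho^{SK}_{U_\lambda,U}(\vec\varphi)\bigr),
\]
where the domain mismatch you worried about is resolved by the linear continuous maps $\rho^{SK}_{U_\lambda,U}\colon \SK U \to \SK{U_\lambda}$ of Theorem~\ref{sk_restriction}. This is an honest smooth local map $\SK U \to C^\infty(U)$ (locally a finite sum, smoothness and locality via Theorem~\ref{sk_restriction}\,\eqref{9.2}); the arbitrary choices made are absorbed only at the quotient level, when one verifies $[R]|_{U_\mu} = [R_\mu]$ using Lemma~\ref{restrpresheaf} and the compatibility hypothesis. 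Point-locality and point-independence are preserved by Theorem~\ref{sk_restriction}\,\eqref{9.2} as well.
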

\begin{proof}
First, let $U \subseteq \Omega$ be open and $(U_\lambda)_\lambda$ an open cover of $U$. Assume $[R] \in \basElocq U$ satisfies $[R]|_{U_\lambda} = 0$ $\forall \lambda$. We claim that $[R]=0$. Let $[(\vec\varphi_k)_k] \in \SKlocnets U$. Any $x \in U$ has an open neighborhood $V$ such that $\overline{V}$ is compact and contained in some $U_\lambda$, hence by Lemma \ref{restrpresheaf} $R ( [ \vec\varphi_k)_k] )|_V = R|_{U_\lambda} ( [ (\vec\varphi_k)_k ] |_{U_\lambda} )|_V$, which implies $[R] = 0$.

Now suppose we are given $[R_\lambda] \in \basElocq{U_\lambda} $ for each $\lambda$ with $[R_\lambda]|_{U_\lambda \cap U_\mu} = [R_\mu]|_{U_\lambda \cap U_\mu}$ $\forall \lambda,\mu$. We need to define $R \in \basEloc{U}$ such that $[R]|_{U_\mu} = [R_\mu]$ $\forall \mu$. Choose a smooth partition of unity $(\chi_\lambda)_\lambda$ on $U$ subordinate to $(U_\lambda)_\lambda$ and
define $R \in \basEloc{U}$ by
\[ R(\vec\varphi) \coleq \sum_\lambda \chi_\lambda \cdot R_\lambda ( \rho^{SK}_{U_\lambda, U} ( \vec\varphi )) \]
For any open set $V$ such that $\overline{V}$ is compact and contained in $U$ there is a finite index set $F$ such that
\[ R(\vec\varphi)|_V = \sum_{\lambda \in F} \chi_\lambda|_V \cdot R_\lambda ( \rho^{SK}_{U_\lambda, U} ( \vec\varphi )|_V. \]
$R$ is smooth if $|_V \circ R \circ c \in C^\infty(\SK U, C^\infty(V))$ for all $c \in C^\infty(\bR, \SK U)$ and such $V$, which is the case because $\rho^{SK}_{U_\lambda, U}$ is smooth for each $\lambda$.

For locality of $R$, suppose $\vec\varphi|_W = \vec\psi|_W$ for $W \subseteq U$ open. Covering $W$ by open subsets $V$ as above, locality follows from Theorem \ref{sk_restriction} \eqref{9.2}, and equally for point-locality and point-independence.

We now need to show that $[R]|_{U_\mu} = [R_\mu]$. We begin by fixing $(\vec\varphi_k)_k \in \SKloc{U_\mu}$ and $x \in U_\mu$, which has an open neighborhood $V$ intersecting only finitely many $\supp \chi_\lambda$, namely those for $\lambda$ in some finite set $F$. We can assume that $\overline{V}$ is compact and contained in $U_\mu \cap \bigcup_{\lambda \in F} U_\lambda$. Choose $(\vec\psi_k)_k \in \SKloc U$ such that $\vec\psi_k|_V = \vec\varphi_k|_V$ for large $k$. Then
\begin{equation}\label{sheafalpha}
 R|_{U_\mu} (\vec\varphi_k)|_V = R (\vec\psi_k)|_V = \sum_{\lambda \in F} \chi_\lambda|_V \cdot R_\lambda ( \rho^{SK}_{U_\lambda, U} ( \vec\psi_k ))|_V.
\end{equation}
For all $\lambda \in F$ then
\begin{align*}
 R_\lambda( [ (\vec\psi_k)_k ]|_{U_\lambda} )|_{U_\lambda \cap U_\mu} & = R_\lambda|_{U_\lambda \cap U_\mu} ( [(\vec\psi_k)_k]|_{U_\lambda \cap U_\mu} ) \\
 = R_\mu|_{U_\lambda \cap U_\mu} ( [(\vec\psi_k)_k]|_{U_\lambda \cap U_\mu} )
 & = R_\mu( [ (\vec\psi_k)_k ]|_{U_\mu} )|_{U_\lambda \cap U_\mu}.
\end{align*}
Because eventually $\rho^{SK}_{U_\mu, U} ( \vec \psi_k)|_V = \vec\psi_k|_V = \vec\varphi_k|_V$ this means that $R_\lambda(\rho^{SK}_{U_\lambda, U} ( \vec\psi_k))|_V = R_\mu ( \rho^{SK}_{U_\mu, U} ( \vec\psi_k ))|_V = R_\mu ( \vec\varphi_k )|_V$  for large $k$
and consequently \eqref{sheafalpha} becomes
\[ R|_{U_\mu} ( \vec\varphi_k)|_V = \left(\sum_{\lambda \in F} \chi_\lambda|_V \right) \cdot R_\mu(\vec\varphi_k)|_V = R_\mu(\vec\varphi_k)|_V \]
which is what we wanted to show.
\end{proof}

What we have shown contains the essence of the proof of the sheaf property for Colombeau algebras. Some adaptions are necessary in order to transfer these results to concrete Colombeau algebras. First of all, the test objects employed satisfy more conditions than localizability and can also be graded, cf.~the example at the end of Section \ref{sec_testing}. Restriction (Theorem \ref{sk_restriction}) and glueing together (Proposition \ref{sk_sheaf}) of such sequences of smoothing kernels need to preserve all these properties. In practice, however, this is easy to verify using Theorem \ref{sk_restriction} \eqref{9.1} because these are only local conditions.

Furthermore, $\basElocq U$ has to be replaced by a quotient $\cG(U) = \cM(U) / \cN(U)$, where a subalgebra $\cM(U)$ of moderate functions containing an ideal $\cN(U)$ of negligible functions are defined via the usual tests. Because these tests also only specify local conditions, one can verify without effort that the results obtained for $\SbasElocq$ also hold for such a quotient $\cG$. Again, this is easily seen for the exemplary algebra given in Section \ref{sec_testing}.

From the above results we can infer that the failure of $\hat\cG^r_s$ to be a sheaf (cf.~\cite[Remark 8.11]{global2}) can be attributed to missing localization properties of its basic space. Finally,
we show some properties of the embedding
\[ \tilde \iota_U\colon  \cD'(U) \to \basEloc U \to \basElocq U \]
which for any open subset $U \subseteq \Omega$ is given by the composition of $\iota \colon \cD'(U) \to \basEloc U$ and the quotient map into $\basElocq U$. These results transfer directly to concrete Colombeau algebras, as well.
\begin{proposition}Let $U \subseteq \Omega$ be open. Then the following holds:
\begin{enumerate}[($i$)]
 \item\label{15.1} $\tilde\iota_U$ is injective.
 \item\label{15.2} $\tilde\iota$ is a sheaf morphism, i.e., $(\tilde \iota_U u)|_V  = \tilde \iota_V ( u|_V)$ for any $V \subseteq U$ open and $u \in \cD'(U)$.
 \item\label{15.3} $\supp u = \supp \tilde \iota_U u$ for $u \in \cD'(U)$.
\end{enumerate}
\end{proposition}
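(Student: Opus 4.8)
The plan is to prove the injectivity statement \eqref{15.1} first, since it carries essentially all the content, and then to deduce \eqref{15.2} and \eqref{15.3} from it together with the characterization of restriction in Theorem~\ref{eloc_restriction} and the sheaf property of $\SbasElocq$. For \eqref{15.1}, assume $\tilde\iota_U u = 0$, i.e.\ $\iota u \in \cN(U)$, and fix one \emph{localizing} sequence $(\vec\varphi_k)_k \in \SKloc{U}$ which is at the same time a regularizing sequence in the sense of \eqref{distrapprox}, that is, $\Phi_k \to \id_{\cD'(U)}$ in $\cL_b(\cD'(U),\cD'(U))$; such sequences exist---they are precisely the standard test objects of Section~\ref{sec_testing}, for instance a net of mollifiers with a smoothly varying bandwidth bounded by the distance of the base point to $\bR^n\setminus U$, which makes the net both localizing and well defined on all of $U$. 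On one hand $(\iota u)(\vec\varphi_k) = \Phi_k(u) \to u$ in $\cD'(U)$. On the other hand $\iota u \in \cN(U)$ forces the sequence $\bigl((\iota u)(\vec\varphi_k)\bigr)_k$ of smooth functions to be locally eventually zero; covering the compact support of an arbitrary $\psi \in \cD(U)$ by finitely many of the neighbourhoods on which this sequence eventually vanishes, we get $\langle (\iota u)(\vec\varphi_k),\psi\rangle = 0$ for large $k$, so $(\iota u)(\vec\varphi_k) \to 0$ in $\cD'(U)$ as well, whence $u = 0$.

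For the sheaf-morphism property \eqref{15.2}, fix open $V\subseteq U$ and $u\in\cD'(U)$. I would prove the formally stronger statement that $(\iota u)|_V = \iota(u|_V)$ already holds in $\basEloc{V}$, which yields \eqref{15.2} after applying the quotient map: $(\tilde\iota_U u)|_V = [(\iota u)|_V] = [\iota(u|_V)] = \tilde\iota_V(u|_V)$. To this end it suffices to check that $\iota(u|_V)$ satisfies the characterizing condition (*) of Theorem~\ref{eloc_restriction} and then invoke its uniqueness clause: if $W$ is open with $\overline{W}\subseteq V$, $\vec\varphi\in\SK{V}$ and $\vec\psi\in\SK{U}$ with $\vec\varphi|_W = \vec\psi|_W$, then for every $x\in W$ the test function $\vec\psi(x)$ is the extension by zero of $\vec\varphi(x)$, so $\langle u,\vec\psi(x)\rangle = \langle u|_V,\vec\varphi(x)\rangle$ by the very definition of the restriction of a distribution; that is, $\iota(u|_V)(\vec\varphi)|_W = (\iota u)(\vec\psi)|_W$, as (*) demands.

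For \eqref{15.3}, recall that $\SbasElocq$ and $\cD'$ are sheaves, so for any of their sections the union of all open subsets on which it restricts to $0$ is again such a set (by the identity axiom of the sheaf), and hence $\supp\tilde\iota_U u = U\setminus\bigcup\{V\subseteq U \text{ open}: (\tilde\iota_U u)|_V = 0\}$ and $\supp u = U\setminus\bigcup\{V\subseteq U \text{ open}: u|_V = 0\}$ are well defined. By \eqref{15.2} and then \eqref{15.1} applied over $V$ one has $(\tilde\iota_U u)|_V = 0 \iff \tilde\iota_V(u|_V) = 0 \iff u|_V = 0$ for every open $V\subseteq U$; hence the two families of open sets coincide, and so do the two supports.

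The only step that is not pure bookkeeping is, inside \eqref{15.1}, the existence of a single localizing sequence of smoothing kernels that additionally regularizes distributions over an \emph{arbitrary} open $U$---concretely, arranging the mollifier bandwidth to depend smoothly on the base point while remaining compatible with $\partial U$. This is the standard construction underlying Colombeau test objects and I would only sketch it; the remaining arguments merely combine the uniqueness in Theorem~\ref{eloc_restriction}, the definition of $\cN(U)$, and the sheaf axioms already established.
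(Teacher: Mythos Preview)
Your proof is correct. Parts \eqref{15.1} and \eqref{15.3} follow exactly the same route as the paper: for injectivity, choose a localizing sequence that is also regularizing and combine the $\cN(U)$-condition with weak convergence to $u$; for supports, reduce to \eqref{15.1} and \eqref{15.2} via the sheaf axioms.

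For \eqref{15.2} you take a genuinely different and somewhat cleaner path. The paper argues at the level of the quotient $\basElocq{V}$: given $[(\vec\varphi_k)_k] \in \SKlocnets V$ and an open $W$ with $\overline W$ compact in $V$, it extends to $[(\vec\psi_k)_k] \in \SKlocnets U$ via Corollary~\ref{sk_soft} and then compares $(\iota_U u)|_V(\vec\varphi_k)|_W$ with $\iota_V(u|_V)(\vec\varphi_k)|_W$ for large $k$. You instead prove the stronger identity $(\iota u)|_V = \iota(u|_V)$ already in $\basEloc V$, by verifying directly that $\iota(u|_V)$ satisfies the characterizing condition~(*) of Theorem~\ref{eloc_restriction} and invoking uniqueness there; since $\vec\varphi|_W = \vec\psi|_W$ in~(*) precisely says that each $\vec\psi(x)$ is the zero extension of $\vec\varphi(x)\in\cD(V)$, the pairing identity $\langle u,\vec\psi(x)\rangle = \langle u|_V,\vec\varphi(x)\rangle$ is immediate. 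Your version avoids any mention of localizing sequences in this step and gives a statement that is strictly stronger than what is needed; the paper's approach, on the other hand, stays uniformly within the $\SSKlocnets$-framework developed in Section~\ref{sec_sheaf}.
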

\begin{proof}
 \eqref{15.1}: Assume $\tilde\iota_U u=0$ for $u \in \cD'(U)$. Choose $(\vec\varphi_k)_k \in \SKloc U$ such that the corresponding sequence $(\Phi_k)_k$ in $\cL(\cD'(U), C^\infty(U))$ converges to the identity in $\cL_b(\cD'(U), \cD'(U))$.
Now $\tilde \iota_U u=0$ implies $(\langle u, \vec\varphi_k\rangle)_k \sim 0$, hence any $x \in U$ has an open neighborhood $V$ such that $\langle u, \vec\varphi_k \rangle|_V = 0$ for large $k$; because $\langle u, \vec\varphi_k \rangle|_V \to u|_V$ in $\cD'(V)$ this implies $u|_V = 0$ and hence $u=0$.

\eqref{15.2}: Let $[(\vec\varphi_k)_k] \in \SKlocnets V$ and $W$ open such that $\overline{W}$ is compact and contained in $V$. Using Corollary \ref{sk_soft} choose $[(\vec\psi_k)_k] \in \SKlocnets U$ such that $\vec\psi_k|_W = \vec\varphi_k|_W$ for large $k$. Then
\begin{align*}
 (\iota_U u)|_V ( \vec\varphi_k)|_W &= (\iota_U u)(\vec\psi_k)|_W = \langle u, \vec\psi_k\rangle|_W \\
 &= \langle u|_V, \vec\varphi_k\rangle|_W = \iota_V ( u|_V)(\vec\varphi_k)|_W
\end{align*}
for large $k$, which implies the claim.

\eqref{15.3}: $x \in \supp u$ if and only if for any open neighborhood $V$ of $x$, $u|_V \ne 0$. Because of \eqref{15.1} and \eqref{15.2} $u|_V \ne 0$ is equivalent to $\tilde  \iota_V (u_V) = \tilde \iota_U(u)|_V \ne 0$. Hence, $\supp u = \supp \tilde \iota_U u$.
\end{proof}
Summarizing, we have seen how the sheaf property of Colombeau algebras directly depends on the use of localizing sequences of smoothing kernels. Our presentation allows for the independent study of localization properties of generalized functions at the level of the basic space on the one hand and of properties of sequences of smoothing operators on the other hand, even before any testing for moderateness and negligibility is involved.

\section{Derivatives}\label{sec_deriv}

There are two different approaches to the notion of Lie derivative in $\basE\Omega$. The first one, common to all Colombeau algebras, is of a geometric nature and defines the Lie derivative via the pullback along the flow of a vector field. For a \emph{complete} vector field $X$ on $\Omega$ we hence define the Lie derivative along $X$ of $R \in \basE\Omega$ as
\begin{equation}\label{alpha}
 \LE_X R \coleq \left.\frac{\ud}{\ud t}\right|_{t=0} (\Fl^X_t)^* R \in \basE\Omega
\end{equation}
where $\Fl^X_t$ is the flow of $X$ at time $t$. Because \eqref{alpha} needs $\Fl^X_t$ to be globally defined for small $t$ this definition does not apply in the case $X$ is not complete. Applying the chain rule \cite[Theorem 3.18]{KM} to \eqref{alpha}, however, we obtain an expression for $\LE_XR$ which makes sense for arbitrary $X$:
\begin{definition}
 The \emph{Lie derivative} of $R \in \basE\Omega$ along the vector field $X \in C^\infty(\Omega, \bR^n)$ is defined as
\begin{equation}\label{geomlie}
(\LE_X R)(\vec \varphi) = -\ud R (\vec \varphi) (\Lsk_X\vec\varphi) + \Lsm_X (R(\vec \varphi)) \qquad (\vec\varphi \in \SK\Omega).
\end{equation}
Here, $\Lsk_X\vec\varphi \coleq \Lsm_X \vec\varphi + \Lnf_X \circ \vec \varphi$ is the Lie derivative of $\vec\varphi$ obtained in the same manner by pullback along the flow and the chain rule.
\end{definition}

\begin{remark}\label{rem_derivatives}
Note that in the definition of $\Lsk_X\vec\varphi$, two different Lie derivatives appear: first, $\Lsm_X\vec\varphi = \Lsm_X [ x \mapsto \vec\varphi(x) ]$ is the usual directional derivative of smooth functions. Second, $(\Lnf_X \circ \vec\varphi)(x) = \Lnf_X ( \vec\varphi(x) )$ is the $n$-form derivative. It will always be clear from the context which of the derivatives is being used, which is why we refrained from introducing different symbols for them. However, one has to take care in order to avoid ambiguous expressions like $\Lie_X\vec\varphi(x)$ which could be read as either $(\Lsm_X\vec\varphi)(x)$ or $\Lnf_X ( \vec\varphi(x))$. In the context of $\hat\cG$, these derivatives were denoted $\Lie'_X$ and $\Lie_X$ \cite[Section 3]{global}.
\end{remark}

The second notion of Lie derivative in $\basE\Omega$ comes from the idea of extending operations from $C^\infty(\Omega)$ to $\basE\Omega$ by applying them for fixed $\vec\varphi$:
\begin{definition}For $R \in \basE\Omega$ and $X \in C^\infty(\Omega, \bR^n)$ we set
\[
\tLE_X R \coleq  \Lsm_X \circ R.
\]
\end{definition}
The importance of $\tLE_X$ lies in the fact that it is $C^\infty(\Omega)$-linear in $X$, which will be a crucial property to have in any algebra of generalized tensor fields in order to retain classical tensor calculus. It is conceptually new to have both $\LE_X$ and $\tLE_X$ in the same algebra, as up to now only one of them was available in at a time: $\LE_X$ in full algebras and $\tLE_X$ in special ones. Our basic space $\basE\Omega$ permits to have both at the same time, hence bridges the gap between full and special algebras.

Analogously to the above procedure leading to $\LE_X$ one obtains Lie derivatives for the spaces appearing in Proposition \ref{isos}.
\begin{definition}We define the following Lie derivatives:
\begin{align*}
R \in C^\infty(\cD(\Omega), C^\infty(\Omega)): &\quad (\LE_XR)(\varphi) \coleq - \ud R(\varphi)(\Lnf_X\varphi) + \Lsm_X(R(\varphi)) \\
R \in C^\infty(\cD(\Omega)): &\quad (\LE_X R)(\varphi) \coleq -\ud R(\varphi)(\Lnf_X\varphi) \\
R \in C^\infty(\Omega, \cD'(\Omega)): &\quad (\LE_X R)(x)(\varphi) \coleq \langle -R(x), \Lnf_X \varphi \rangle + \langle (\Lsm_X R)(x), \varphi \rangle \\
R \in \cD'(\Omega): &\quad (\LE_X R)(\varphi) \coleq - \langle R, \Lnf_X \varphi\rangle
\end{align*}
\end{definition}
Of course, for $\cD'(\Omega)$ $\LE_X$ is the usual derivative of distributions. The main properties of all the above derivatives are summarized in the following theorem.

\begin{theorem}\label{thm_diffs}
\begin{enumerate}[($i$)]
 \item\label{13.1} The spaces of diagram \eqref{inclusions} are invariant under $\LE_X$.
 \item\label{13.2} The isomorphisms of Proposition \ref{isos} commute with the Lie derivatives $\LE_X$ on the respective spaces.
\item \label{13.3} $\tLE_X$ maps $\basEpi\Omega$ and $\basEploc\Omega$ into $\basEloc\Omega$, leaves $\basEloc\Omega$ and $\basE\Omega$ invariant and preserves linearity.
  \item \label{13.4} $\LE_X$ commutes with $\iota$ and $\sigma$ and is not $C^\infty(\Omega)$-linear in $X$; $\tLE_X$ only commutes with $\sigma$ but is $C^\infty(\Omega)$-linear in $X$.
 \item \label{13.5} For $R = \iota u \in \basE\Omega$ with $u \in \cD'(\Omega)$ and $(\vec\varphi_k)_k$ a sequence in $\SK\Omega$ such that, if $(\Phi_k)_k$ is the corresponding sequence of smoothing operators, $\Phi_k \to \id_{\cD'(\Omega)}$ in $\cL_b(\cD'(\Omega), \cD'(\Omega))$, we have $(\tLE_XR - \LE_XR)(\vec\varphi_k) \to 0$ in $\cD'(\Omega)$.
\end{enumerate}
\end{theorem}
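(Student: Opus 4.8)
The plan is to verify each of the five assertions essentially by unwinding the definitions and invoking the convenient-calculus chain rule together with Proposition \ref{isos} and Proposition \ref{prop_testing}. The computations are routine once the right formulas are in place; the only genuinely delicate point is \eqref{13.5}, so I would organize the proof to build towards it.

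First, for \eqref{13.1} and \eqref{13.3} I would simply check that the locality conditions of Definition \ref{locdef} are stable under the operations defining $\LE_X$ and $\tLE_X$. For $\LE_X R$, note from \eqref{geomlie} that $(\LE_X R)(\vec\varphi)|_U$ depends only on $R(\vec\varphi)|_U$, $\ud R(\vec\varphi)|_U$ and $(\Lsk_X\vec\varphi)|_U$, and $\Lsk_X$ is itself a local (indeed point-local) operation on $\SK\Omega$; hence if $\vec\varphi|_U = \vec\psi|_U$ then the corresponding pieces agree (for the derivative term one uses that $\ud R$ is again smooth and that $\vec\varphi \mapsto \vec\varphi|_U$ is linear continuous, so $\ud R(\vec\varphi)|_U$ depends only on $\vec\varphi|_U$). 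The same bookkeeping with points instead of open sets handles point-locality and point-independence, and linearity is clear since \eqref{geomlie} is linear in $R$. For $\tLE_X R = \Lsm_X\circ R$, the support-decreasing property $\supp(\Lsm_X f)\subseteq\supp f$ immediately gives that locality and $\basE\Omega$ are preserved and that linearity is preserved; the point that $\tLE_X$ sends $\basEpi\Omega$ and $\basEploc\Omega$ only into $\basEloc\Omega$ (and not back into themselves) is because $\Lsm_X$ reads off first derivatives, so $(\Lsm_X R(\vec\varphi))(x)$ depends on $R(\vec\varphi)$ near $x$, not merely at $x$.

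For \eqref{13.2} I would check, for each of the four isomorphisms of Proposition \ref{isos}, that the Lie derivative defined on $C^\infty(\cD(\Omega),C^\infty(\Omega))$, $C^\infty(\cD(\Omega))$, $C^\infty(\Omega,\cD'(\Omega))$ and $\cD'(\Omega)$ corresponds under the respective identification to the restriction of $\LE_X$ on $\basE\Omega$. Concretely, using \eqref{plocform} one has for $R \in \basEploc\Omega$ and its partner $S$ that $R(\vec\varphi)(x) = S(\vec\varphi(x))(x)$; differentiating this identity with the chain rule (the $x$ appears in three slots) produces exactly the stated formula for $\LE_X S$, the three terms of the product rule matching $-\ud S(\varphi)(\Lnf_X\varphi)$, the contribution of $\Lsm_X$ hitting the codomain, and the interior-slot term which vanishes by point-locality bookkeeping; the point-independent and distributional cases are the same computation with fewer slots. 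Part \eqref{13.4} is then a direct calculation: applying \eqref{geomlie} to $\iota u$ and using $\ud(\iota u)(\vec\varphi)(\vec\psi) = \langle u,\vec\psi\rangle$ together with the integration-by-parts identity $\langle u,\Lnf_X\vec\varphi(x)\rangle + (\text{boundary-free form})$ one gets $\LE_X(\iota u) = \iota(\LE_X u)$, and similarly $\LE_X(\sigma f) = \sigma(\Lsm_X f)$ since $\ud(\sigma f) = 0$; for $\tLE_X$ one has $\tLE_X(\sigma f)(\vec\varphi) = \Lsm_X f = \sigma(\Lsm_X f)(\vec\varphi)$ but $\tLE_X(\iota u)(\vec\varphi) = \Lsm_X\langle u,\vec\varphi\rangle$ which is \emph{not} of the form $\langle v,\vec\varphi\rangle$, so $\tLE_X$ does not commute with $\iota$. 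The $C^\infty(\Omega)$-linearity of $\tLE_X$ in $X$ is immediate from $\Lsm_{fX} = f\cdot\Lsm_X$, while for $\LE_X$ the extra term $-\ud R(\vec\varphi)(\Lnf_X\vec\varphi)$ with $\Lnf_{fX}\vec\varphi = f\cdot\Lnf_X\vec\varphi + (\Lsm_{?}f)\cdot\vec\varphi$-type corrections breaks $C^\infty(\Omega)$-linearity, which one exhibits by a one-line example (e.g.\ $R = \iota\delta$).

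The main obstacle is \eqref{13.5}. Here $(\tLE_X R - \LE_X R)(\vec\varphi) = \ud R(\vec\varphi)(\Lsk_X\vec\varphi)$ for $R = \iota u$, which equals $\langle u,\Lsk_X\vec\varphi\rangle = \langle u,\Lsm_X\vec\varphi + \Lnf_X\circ\vec\varphi\rangle$. I would test this against a fixed $\psi\in\cD(\Omega)$ and rewrite, using the transpose, as a pairing of $u$ with an expression built from $\Phi_k^t$ applied to $\psi$ and its derivatives; the hypothesis $\Phi_k\to\id_{\cD'(\Omega)}$ in $\cL_b(\cD'(\Omega),\cD'(\Omega))$ is equivalent (by the isomorphism \eqref{SOSKiso} restricted appropriately, cf.\ the discussion around Proposition \ref{prop_testing}) to $\Phi_k^t\to\id$ on $\cD(\Omega)$, and the combination $\Lsm_X\vec\varphi_k + \Lnf_X\circ\vec\varphi_k$ is precisely the piece that, under transposition, measures the failure of $\Phi_k^t$ to intertwine $\Lnf_X$; since in the limit $\Phi_k^t = \id$ this failure vanishes, one gets $\langle u,\Lsk_X\vec\varphi_k\rangle\to 0$ weakly. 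The technical care needed is in justifying the passage to the transpose at the level of the $x$-dependent smoothing kernels and in checking that the relevant limit holds in $\cD'(\Omega)$ (weak convergence) rather than in a stronger topology; this is where I expect to spend the bulk of the argument, modeling it on the proof of Proposition \ref{prop_testing}.
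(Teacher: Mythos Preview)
Your overall plan is sound, and parts \eqref{13.2}--\eqref{13.4} go through essentially as you describe (the paper in fact derives \eqref{13.4} simply as a consequence of \eqref{13.2}). However, there are two places where your proposal diverges from what is actually needed.

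\textbf{A genuine gap in \eqref{13.1}.} Your argument for invariance of $\basEploc\Omega$ and $\basEpi\Omega$ under $\LE_X$ rests on the claim that ``$\Lsk_X$ is itself a local (indeed point-local) operation on $\SK\Omega$''. This is false: $(\Lsk_X\vec\varphi)(x) = (\Lsm_X\vec\varphi)(x) + \Lnf_X(\vec\varphi(x))$, and the first summand involves derivatives of $x\mapsto\vec\varphi(x)$, so it depends on $\vec\varphi$ near $x$, not only on $\vec\varphi(x)$. Likewise, $\Lsm_X(R(\vec\varphi))(x)$ is not determined by $\vec\varphi(x)$ alone even when $R$ is point-local. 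The reason $\LE_X R$ is nevertheless point-local is a \emph{cancellation}: using \eqref{plocform} one computes
\[
\Lsm_X(R(\vec\varphi))(x) = \ud R(\vec\varphi(x)^\sim)((\Lsm_X\vec\varphi)(x)^\sim)(x) + \Lsm_X(R(\vec\varphi(x)^\sim))(x),
\]
and the first term here cancels the $\Lsm_X\vec\varphi$-contribution in $-\ud R(\vec\varphi)(\Lsk_X\vec\varphi)(x)$, leaving
\[
(\LE_X R)(\vec\varphi)(x) = -\ud R(\vec\varphi(x)^\sim)\bigl((\Lnf_X(\vec\varphi(x)))^\sim\bigr)(x) + \Lsm_X(R(\vec\varphi(x)^\sim))(x),
\]
which visibly depends only on $\vec\varphi(x)$. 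The point-independent case is the analogous cancellation using \eqref{piform}, where additionally the second term vanishes. You need this explicit computation; ``the same bookkeeping with points'' does not suffice. (As a bonus, these formulas are exactly what you then use to verify \eqref{13.2}.)

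\textbf{Part \eqref{13.5} is easier than you expect.} Your identity $(\tLE_X R - \LE_X R)(\vec\varphi) = \ud R(\vec\varphi)(\Lsk_X\vec\varphi) = \langle u, \Lsm_X\vec\varphi + \Lnf_X\circ\vec\varphi\rangle$ is correct, but from here there is no need for transpose arguments. Simply observe that $\langle u, \Lnf_X\circ\vec\varphi_k\rangle = -\langle \Lie_X u, \vec\varphi_k\rangle = -\Phi_k(\Lie_X u)$ and $\langle u, \Lsm_X\vec\varphi_k\rangle = \Lsm_X\langle u,\vec\varphi_k\rangle = \Lsm_X(\Phi_k(u))$, so the difference is $\Lsm_X(\Phi_k(u)) - \Phi_k(\Lie_X u)$. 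Since $\Phi_k\to\id_{\cD'(\Omega)}$ in $\cL_b(\cD'(\Omega),\cD'(\Omega))$ and $\Lsm_X$ is continuous on $\cD'(\Omega)$, both terms converge to $\Lie_X u$ in $\cD'(\Omega)$, and you are done in two lines. The transpose route would work but is unnecessarily elaborate.
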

\begin{proof}
\eqref{13.1}: In the case of \eqref{alpha} this would follow immediately from functoriality of the spaces, but as we use \eqref{geomlie} as our definition we have to give an explicit proof.

First, let $R \in \basEloc\Omega$ and suppose $\vec\varphi|_U = \vec\psi|_U$. Then $R(\vec\varphi)|_U = R(\vec\psi)|_U$ and hence $\Lsm_X(R(\vec\varphi))|_U = \Lsm_X(R(\vec\psi))|_U$. Furthermore, $(\Lsk_X\vec\varphi)|_U = (\Lsm_X \vec\varphi)|_U + \Lnf_X \circ \vec\varphi|_U = (\Lsm_X\vec\psi)|_U + \Lnf_X \circ \vec\psi|_U = (\Lsk_X\vec\psi)|_U$ and thus
$\ud R(\vec\varphi)(\Lsk_X\vec\varphi)|_U = (\frac{\ud}{\ud t}|_{t=0} R(\vec\varphi + t \cdot \Lsk_X\vec\varphi))|_U = \frac{\ud}{\ud t}|_{t=0} ( R(\vec\varphi + t \cdot \Lsk_X\vec\varphi)|_U ) = \frac{\ud}{\ud t}|_{t=0} ( R(\vec\psi + t \cdot \Lsk_X\vec\psi)|_U) = \ud R(\vec\psi)(\Lsk_X\vec\psi)|_U$ because $|_U \colon C^\infty(\Omega) \to C^\infty(U)$ is linear and continuous. In sum we have $(\LE_XR)(\vec\varphi)|_U = (\LE_XR)(\vec\psi)|_U$.

Second, let $R \in \basEploc\Omega$. From \eqref{plocform} we obtain
\begin{align*}
 \Lsm_X(R(\vec\varphi))(x) &= \ud R ( \vec\varphi(x)^\sim ) ((\Lsm_X \vec\varphi)(x)^\sim)(x) + \Lsm_X(R(\vec\varphi(x)^\sim))(x), \\
\ud R(\vec\varphi)(\Lsk_X\vec\varphi)(x) &= (\frac{\ud}{\ud t}|_{t=0} R (\vec\varphi + t \cdot \Lsk_X\vec\varphi))(x) = \frac{\ud}{\ud t}|_{t=0} ( R(\vec\varphi + t \cdot \Lsk_X\vec\varphi)(x)) \\
&= \frac{\ud}{\ud t}|_{t=0} ( R((\vec\varphi(x) + t \cdot \Lsk_X\vec\varphi(x))^\sim)(x))\\
& = \ud R(\vec\varphi(x)^\sim)((\Lsk_X\vec\varphi(x))^\sim)(x)
\end{align*}
and because $\Lsk_X\vec\varphi = \Lsm_X\vec\varphi + \Lnf_X \circ \vec\varphi$ this results in
\begin{equation}\label{blah3}
(\LE_XR)(\vec\varphi)(x) = -\ud R(\vec\varphi(x)^\sim)((\Lnf_X (\vec\varphi(x)))^\sim)(x) + \Lsm_X ( R(\vec\varphi(x)^\sim))(x).
\end{equation}
from which $\LE_XR \in \basEploc\Omega$ follows.

Third, let $R \in \basEpi\Omega$. \eqref{piform} shows that the second term in \eqref{blah3} vanishes and
\begin{equation}\label{blah4}
(\LE_XR)(\vec\varphi)(x) = -\ud R(\vec\varphi(x)^\sim)((\Lnf_X (\vec\varphi(x)))^\sim) \in \bC \subseteq C^\infty(\Omega),
\end{equation}
which obviously defines an element of $\basEpi\Omega$.

Finally, if $R$ is linear then \eqref{geomlie} reduces to $(\LE_XR)(\vec\varphi) = - R(\Lsk_X\vec\varphi) + \Lsm_X(R(\vec\varphi))$ which again is linear in $\vec\varphi$.

\eqref{13.2}: We denote the isomorphism \eqref{alphaa} by $R \mapsto S_R$ with inverse $S \mapsto R_S$ and note that $(\ud R_S)(\vec\varphi)(\vec\psi)(x) = \ud S(\vec\varphi(x))(\vec\psi(x))(x)$.
Then, by \eqref{blah3},
\begin{align*}
(R_{\LE_X S})(\vec\varphi)(x) &= (\LE_X S)(\vec\varphi(x))(x) \\
&= - \ud S(\vec\varphi(x))(\Lnf_X(\vec\varphi(x)))(x) + \Lsm_X(S(\vec\varphi(x)))(x) \\
&= - \ud R_S (\vec \varphi(x)^\sim) ( (\Lnf_X (\vec\varphi(x)))^\sim)(x) + \Lsm_X ( R_S ( \vec\varphi(x)^\sim ) )(x) \\
&= (\LE_X R_S) ( \vec\varphi(x)^\sim )(x) = (\LE_XR_S)(\vec\varphi)(x).
\end{align*}
For $R \in \basEpi\Omega$ we similarly have $(\ud R_S)(\vec\varphi)(\vec\psi)(x) = \ud S (\vec\varphi(x))(\vec\psi(x))$ and by \eqref{blah4}
\begin{align*}
 (R_{\LE_X S})(\vec\varphi)(x) &= (\LE_X S)(\vec\varphi(x))
= - \ud S (\vec\varphi(x))(\Lnf_X(\vec\varphi(x))) \\
&= - \ud R_S (\vec\varphi(x)^\sim)((\Lnf_X(\vec\varphi(x)))^\sim)
= (\LE_X R_S)(\vec \varphi)(x).
\end{align*}
The linear case is clear from this.

\eqref{13.3}: That $\tLE_X$ leaves $\basEloc\Omega$ invariant is obvious from the definitions. For $R \in \basLloc\Omega$ we have $(\tLE_X R)(\vec \varphi)(x) = \langle R, (\Lsm_X \vec\varphi)(x) \rangle$ which clearly is in $\basLloc\Omega \setminus \basLploc\Omega$, hence $\basEpi\Omega$ and $\basEploc\Omega$ are not invariant under $\tLE_X$.

\eqref{13.4} is clear from \eqref{13.2}.

For \eqref{13.5} we regard
\begin{align*}
 (\tLE_X (\iota u) - \LE_X(\iota u))(\vec\varphi_k) & = \langle u, \Lsm_X \vec\varphi_k \rangle + \langle u, \Lnf_X \circ \vec\varphi_k \rangle   \\
& = \langle u, \Lsm_X \vec\varphi_k \rangle - \langle \Lie_X u, \vec\varphi_k \rangle \\
&= \Lsm_X(\Phi_k(u)) - \Phi_k ( \Lie_X u) \to 0\quad\textrm{in }\cD'(\Omega).\qedhere
\end{align*}
\end{proof}

We remark that the above derivatives extend to $\basElocq$ componentwise as $\LE_X [R] \coleq [\LE_X R]$ and $\tLE_X [R] \coleq [\tLE_X R]$. Furthermore, one can also define the Lie derivative $\LE_X$ for arbitrary vector fields locally by \eqref{alpha} and show that it indeed is given by formula \eqref{geomlie}.

From \eqref{13.3} it follows that $\tLE_X$ cannot be defined intrinsically in $\hat\cG$ \cite{global} because it leads out of its basic space; the same reasoning applies in $\hat\cG^r_s$ \cite{global2}, where one would need a derivative which is $C^\infty$-linear in the vector field $X$ in order to define a meaningful covariant derivative. The solution is to use a basic space of \emph{local} functions, where $\tLE_X$ is well-defined. Furthermore, by \eqref{13.5} in any Colombeau algebra constructed on $\SbasEloc$, $\tLE_X (\iota u)$ will be \emph{associated} to $\LE_X (\iota u)$ for all distributions $u$, which is an important property to have in order to esablish compatibility with classical distribution theory.

\section{Simplified Colombeau algebras}\label{sec_realize}

In order to relate our setting to the classical theory we will now show how the simplified Colombeau algebra $\cG^s$ can be obtained from the basic space $\basE\Omega$ by imposing the corresponding tests.

First, we recall the definition of $\cG^s$ in detail. For ease of presentation we will use the variant where representatives of generalized functions are \emph{sequences} of smooth functions \cite{1126.46030}, as opposed to the more commonly used \emph{nets} of smooth functions indexed by $(0,1]$; for the latter case the construction of Theorem \ref{rel_simplified} below would be technically more involved.

In the following, we write $\check\varphi(y) \coleq \varphi(-y)$ and $(\tau_x \varphi)(y) \coleq \varphi(y-x)$ for any $\varphi \in \cD(\Omega)$.

$\cE^s(\Omega) \coleq C^\infty(\Omega)^\bN$ is the \emph{basic space} of the simplified algebra. $\cE_M^s(\Omega) \coleq \{ (u_k)_k \in \cE^s(\Omega)\ |\ \forall p \in \cs(C^\infty(\Omega))\ \exists N \in \bN: p(u_k) = O(k^N)\ (k \to \infty) \}$ is the subset of moderate elements and $\cN^s(\Omega) \coleq \{ (u_k)_k \in \cE^s(\Omega)\ |\ \forall p \in \cs(C^\infty(\Omega))\ \forall m \in \bN: p(u_k) = O(k^{-m})\ (k \to \infty) \}$ the subset of negligible elements. Partial derivatives on $\cE^s(\Omega)$ are defined as $\D^s_i ((u_k)_k) \coleq (\pd_i u_k)_k$ for $i=1,\dotsc,n$.

For the embedding of distributions into $\cG^s(\Omega)$ one employs a mollifier $\rho \in \cS(\bR^n)$ (the Schwartz space of rapidly decreasing functions) satisfying
\[ \int \rho(x)\,\ud x = 1,\qquad \int x^\alpha \rho(x)\,\ud x = 0\quad \forall \alpha>0. \]
We set $\rho_k(x) \coleq k^n \rho(kx)$ for $k \in \bN$.  Then, compactly supported distributions are embedded into $\cE^s(\Omega)$ by convolution via
\[ \iota^s u \coleq ((u * \rho_k)|_\Omega)_k \in \cE^s(\Omega)\qquad (u \in C^\infty(\Omega)'). \]
The fact that $\cG^s(\Omega)$ is a sheaf \cite[Theorem 1.2.4]{GKOS} determines an embedding of $\cD'(\Omega)$ as follows: cover $\Omega$ by a family $(\Omega_\lambda)_\lambda$ of open relatively compact subsets. Choose functions $\psi_\lambda \in \cD(\Omega)$ for each $\lambda$ such that $\psi_\lambda = 1$ on a neighborhood of $\overline{\Omega_\lambda}$ and a partition of unity $(\chi_j)_{j \in \bN}$ subordinate to $\Omega_\lambda$ where each $\chi_j$ has support in $\Omega_{\lambda_j}$ for some $\lambda(j)$. Then the formula for the embedding $\iota^s$ is (see \cite[Equation (1.8)]{GKOS})
\[ \iota^s u \coleq \sum_{j=1}^\infty \chi_j ((\psi_{\lambda(j)} u ) * \rho_k)_k = \langle u, \vec\psi_k \rangle \qquad (u \in \cD'(\Omega)) \]
where $\vec\psi_k \in \SK\Omega$ is given by $\vec\psi_k(x) \coleq \sum_{j} \chi_j(x) \cdot \psi_{\lambda(j)} \cdot \tau_x\check\rho_k$.
We point out that $(\LE_{\pd_i} \vec\psi_k)_k \sim 0$ in $\SKlocnets \Omega$ for all $i$, as is easily verified.

Smooth functions are embedded via
\[ \sigma^s f \coleq (f)_k \qquad (f \in C^\infty(\Omega)). \]
Now $\iota$ and $\sigma$ map into $\cE^s_M(\Omega)$, $(\iota - \sigma)(C^\infty(\Omega)) \subseteq \cN^s(\Omega)$, $\iota(\cD'(\Omega)) \cap \cN^s(\Omega) = \{0\}$, $\cE^s_M(\Omega)$ is a subalgebra of $\cE^s(\Omega)$ and $\cN^s(\Omega)$ an ideal in $\cE^s_M(\Omega)$ and $\D_i$ preserves moderateness and negligibility; $\cG^s(\Omega) \coleq \cE_M(\Omega) / \cN(\Omega)$ then is called the \emph{simplified Colombeau algebra}.

In $\cG^s(\Omega)$, moderateness and negligiblity of embedded objects are determined by testing with $(\vec\psi_k)_k$. We now impose the same tests on $\cE(\Omega)$:

\begin{definition}$R \in \basE\Omega$ is called \emph{$\cG^s$-moderate} if $\forall p \in \cs(C^\infty(\Omega))$ $\exists N \in \bN$: $p(R(\vec\psi_k)) = O(k^N)$ ($k \to \infty$); $R$ is called \emph{$\cG^s$-negligible} if $\forall p \in \cs(C^\infty(\Omega))$ $\forall m \in \bN$: $p(R(\vec\psi_k)) = O(k^{-m})$ ($k \to \infty$). By $\cE_M(\Omega)$ and $\cN(\Omega)$ we denote the subsets of $\cE(\Omega)$ consisting of moderate and negligible elements, respectively.
\end{definition}
The embeddings $\iota$ and $\sigma$ are given by \eqref{iotadef} and \eqref{sigmadef}; partial derivatives are given by $\D_i \coleq \LE_{\pd_i}$. Exactly as in the case of $\cG^s$ (cf.~\cite[Section 1.2]{GKOS}) it follows that $\iota$ and $\sigma$ map into $\cE_M(\Omega)$, $(\iota - \sigma)(C^\infty(\Omega)) \subseteq \cN(\Omega)$, $\iota(\cD'(\Omega)) \cap \cN(\Omega) = \{0\}$, $\cE_M(\Omega)$ is a subalgebra of $\cE(\Omega)$ and $\cE_N(\Omega)$ is an ideal in $\cE_M(\Omega)$. Because $\Lsk_{\pd_i} \vec\psi_k = 0$ the partial derivatives $\D_i$ preserve moderateness and negligibility. We set $\cG(\Omega) \coleq \cE_M(\Omega) / \cN(\Omega)$.

Both $\cG^s(\Omega)$ and $\cG(\Omega)$ are associative commutative algebras with unit containing $\cD'(\Omega)$ injectively as a linear subspace and $C^\infty(\Omega)$ as a subalgebra; furthermore, they are differential algebras whose derivations extend the usual partial derivatives of $\cD'(\Omega)$ (see \cite[Section 1.3]{GKOS} for the general scheme of construction we followed here).

In order to relate $\cG^s(\Omega)$ and $\cG(\Omega)$ to each other we denote by $F\colon \bN \to \SK\Omega$ the mapping $k \mapsto \vec\psi_k$. Then for $R \in \cE(\Omega)$ the sequence $(R(\vec\varphi_k))_k$ is the pullback $F^*R$ of $R$ under $F$.

We have the following main result about the relation of the simplified algebra $\cG^s(\Omega)$ to $\cG(\Omega)$.

\begin{theorem}\label{rel_simplified}
 \begin{enumerate}[($i$)]
  \item $F^*\colon \cE(\Omega) \to \cE^s(\Omega)$ is surjective.
  \item Given $R \in \cE(\Omega)$, $F^*R$ is moderate or negligible if and only if $R$ is $\cG^s$-moderate or $\cG^s$-negligible, respectively.
  \item $F$ induces a linear isomorphism $\cG(\Omega) \cong \cG^s(\Omega)$ which commutes with the respective embeddings and derivatives:
\[
 \xymatrix{
\cG(\Omega) \ar@<0.5ex>[rr] & & \cG^s(\Omega) \ar@<0.5ex>[ll]\\
& \cD'(\Omega) \ar[ul]^\iota \ar[ur]_{\iota^s}
}
\qquad
\xymatrix{
\cG(\Omega) \ar@<0.5ex>[r] & \cG^s(\Omega) \ar@<0.5ex>[l] \\
\cG(\Omega) \ar[u]^{\D_i} \ar@<0.5ex>[r] & \cG^s(\Omega) \ar[u]_{\D^s_i} \ar@<0.5ex>[l] 
}
\]

 \end{enumerate}
\end{theorem}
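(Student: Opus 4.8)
The plan is to reduce the whole theorem to three statements about the evaluation map $F^*\colon\basE\Omega\to\cE^s(\Omega)$, $R\mapsto(R(\vec\psi_k))_k$: that it is surjective, that $\cE_M(\Omega)=(F^*)^{-1}(\cE_M^s(\Omega))$ and $\cN(\Omega)=(F^*)^{-1}(\cN^s(\Omega))$, and that it intertwines the embeddings and the derivatives. The last two are essentially formal once the first is established, so the core of the argument is the surjectivity~(i).

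For~(i) I would, given $(u_k)_k\in C^\infty(\Omega)^\bN=\cE^s(\Omega)$, produce $R\in\basE\Omega$ with $R(\vec\psi_k)=u_k$ for all $k$. The crucial point is that $D\coleq\{\vec\psi_k:k\in\bN\}$ is a closed discrete subset of $\SK\Omega$ going to infinity: on any fixed compact $K\subseteq\Omega$ the functions $\vec\psi_k(x)$, $x\in K$, all have support in a single compact subset of $\bR^n$ independent of $k$ (this is what the cutoffs $\psi_{\lambda(j)}$ achieve), whereas $\sup_{x\in K,\,y}\abso{\vec\psi_k(x)(y)}$ grows like $k^n$; hence $p(\vec\varphi)\coleq\sup_{x\in K,\,y}\abso{\vec\varphi(x)(y)}$ is a continuous seminorm on $\SK\Omega$ with $p(\vec\psi_k)\to\infty$, so every bounded subset of $\SK\Omega$ meets $D$ — and each $D\setminus\{\vec\psi_k\}$ — in a finite set, which (since smooth curves map compact intervals to bounded sets) makes $D$ and every $D\setminus\{\vec\psi_k\}$ closed in the $c^\infty$-topology. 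Since $\SK\Omega=C^\infty(\Omega,\cD(\Omega))$ is a nuclear space of the type admitting smooth partitions of unity (cf.~\cite{KM}), I would pick a smooth partition of unity $(h_k)_k\cup\{h_\infty\}$ subordinate to the open cover of $\SK\Omega$ formed by the sets $\SK\Omega\setminus(D\setminus\{\vec\psi_k\})$ and $\SK\Omega\setminus D$; then $h_k(\vec\psi_j)=\delta_{jk}$, the family $(h_k)_k$ is locally finite, and $R(\vec\varphi)\coleq\sum_k h_k(\vec\varphi)\,u_k$ is a well-defined smooth map $\SK\Omega\to C^\infty(\Omega)$ (a locally finite sum of smooth maps is smooth) with $R(\vec\psi_k)=u_k$. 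Hence $F^*R=(u_k)_k$.

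Part~(ii) is immediate: $F^*R=(R(\vec\psi_k))_k$, and $R$ being $\cG^s$-moderate (resp.\ negligible) means by definition exactly that this sequence lies in $\cE_M^s(\Omega)$ (resp.\ $\cN^s(\Omega)$), i.e.\ $\cE_M(\Omega)=(F^*)^{-1}(\cE_M^s(\Omega))$ and $\cN(\Omega)=(F^*)^{-1}(\cN^s(\Omega))$. Combined with the surjectivity from~(i), $F^*$ restricts to surjections $\cE_M(\Omega)\twoheadrightarrow\cE_M^s(\Omega)$ and $\cN(\Omega)\twoheadrightarrow\cN^s(\Omega)$ and therefore descends to a surjective linear (in fact algebra) map $\cG(\Omega)\to\cG^s(\Omega)$; it is injective because $F^*R\in\cN^s(\Omega)$ forces $R\in\cN(\Omega)$. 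Compatibility with the embeddings is the identity $F^*(\iota u)=(\langle u,\vec\psi_k\rangle)_k=\iota^s u$ recalled in this section (and likewise $F^*(\sigma f)=(f)_k=\sigma^s f$); compatibility with derivatives follows from~\eqref{geomlie}, which gives $(\LE_{\pd_i}R)(\vec\psi_k)=-\ud R(\vec\psi_k)(\Lsk_{\pd_i}\vec\psi_k)+\pd_i(R(\vec\psi_k))=\pd_i(R(\vec\psi_k))$ because $\Lsk_{\pd_i}\vec\psi_k=0$, hence $F^*\circ\D_i=\D_i^s\circ F^*$. All of this passes to the quotient, yielding~(iii) and its two commuting diagrams.

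The only genuinely non-formal step is the construction in~(i), and inside it the existence of a suitable smooth partition of unity on $\SK\Omega$: this is where I expect the main obstacle to lie. If one wishes to sidestep invoking smooth paracompactness of $C^\infty(\Omega,\cD(\Omega))$, one can instead build the $h_k$ directly from smooth bump functions, the escape of $D$ to infinity with respect to $p$ ensuring the necessary local finiteness; all remaining steps are routine manipulations of the definitions of moderateness, negligibility and the quotient.
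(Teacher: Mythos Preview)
Your overall plan matches the paper's, and parts~(ii) and the passage to the quotient in~(iii) are handled the same way. Two points deserve comment.

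\textbf{Part~(i).} Your route is more abstract than the paper's: you invoke smooth paracompactness of $\SK\Omega=C^\infty(\Omega,\cD(\Omega))$ to produce a subordinate partition of unity. The paper avoids this entirely by composing with the evaluation $\vec\varphi\mapsto\vec\varphi(x_0)$ at a single point $x_0\in\Omega$ chosen so that $\snorm{\vec\psi_k(x_0)}_\infty$ is strictly increasing, and then building disjointly supported bump functions $\chi_k\in C^\infty(\cD(\Omega))$ around the points $\vec\psi_k(x_0)$ via \cite[Proposition~16.7]{KM}. The preimage $R(\vec\varphi)\coleq\sum_k\chi_k(\vec\varphi(x_0)-\vec\psi_k(x_0))\cdot f_k$ is then manifestly a locally finite sum. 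This is precisely the ``build the $h_k$ directly'' alternative you sketch at the end, with the extra simplification of first projecting to $\cD(\Omega)$; it sidesteps the question of whether $\SK\Omega$ itself admits smooth partitions of unity, which you assert but do not verify.

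\textbf{Part~(iii), derivatives.} Your claim that $\Lsk_{\pd_i}\vec\psi_k=0$ is incorrect for the kernels $\vec\psi_k(x)=\sum_j\chi_j(x)\,\psi_{\lambda(j)}\cdot\tau_x\check\rho_k$ used here: the $x$-derivatives of the cut-offs $\chi_j$ and the $y$-derivatives of the $\psi_{\lambda(j)}$ do not cancel, so one only gets
\[
\Lsk_{\pd_i}\vec\psi_k(x)(y)=\sum_j(\pd_i\chi_j)(x)\,\psi_{\lambda(j)}(y)\,\rho_k(x-y)+\sum_j\chi_j(x)\,(\pd_i\psi_{\lambda(j)})(y)\,\rho_k(x-y),
\]
which vanishes only \emph{locally eventually}, i.e.\ $(\Lsk_{\pd_i}\vec\psi_k)_k\sim 0$ in $\SKnets\Omega$, as the paper states just before the theorem. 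Consequently $F^*\circ\D_i=\D_i^s\circ F^*$ does not hold on $\basE\Omega$ but only after passing to classes; the paper's proof accordingly writes $[F^*(\D_iR)]=[(\pd_i(R(\vec\psi_k)))_k]$. Your argument for~(iii) goes through once you replace the erroneous ``$=0$'' by ``$\sim 0$'' and argue on the level of equivalence classes.
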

\begin{proof}
(i) First, we note that there exists $x_0 \in \Omega$ such that $\snorm{\vec\psi_k(x_0)}_{\infty}$ is strictly increasing with $k \to \infty$. Let $(f_k)_k \in \cE^s(\Omega)$ be given. Choose a sequence $(r_k)_k$ of positive real numbers such that both $r_k$ and $r_{k+1}$ are smaller than $(\snorm{\vec\psi_{k+1}(x_0)}_{\infty} - \snorm{\vec\psi_k(x_0)}_\infty )/2$ for each $k \in \bN$. Set $U_k \coleq \{\varphi \in \cD(\bR)\ |\ \norm{\varphi}_{\infty} \le r_k \}$ and choose bump functions $\chi_k \in C^\infty(\cD(\Omega))$ such that $\supp \chi \subseteq U_k$ and $\chi_k(0) = 1$ for $k \in \bN$ \cite[Proposition 16.7]{KM}. Then
\begin{equation}\label{disjoint_supp}
\supp \chi_j(.-\vec\psi_j(x_0)) \cap \supp \chi_k(. - \vec\psi_k(x_0)) = \emptyset\qquad \text{for }j \ne k.
\end{equation} Define $R \in \basE\Omega$ by
\[ R(\vec\varphi) \coleq \sum_k \chi_k ( \vec\varphi(x_0) - \vec\psi_k(x_0)) \cdot f_k. \]
Because of \eqref{disjoint_supp} this sum is well-defined and smooth; it satisfies $R(\vec\psi_k) = f_k$ for all $k$ and thus $F^*R = (f_k)_k$.

(ii) is evident from the definitions. From this it follows that $(F^*)^{-1}$ is well-defined on equivalence classes $[(u_k)_k]$.

For (iii), $F^*$ induces a linear map $\cG(\Omega) \to \cG^s(\Omega)$ which is injective by (ii). Hence, $\cG^s(\Omega) \cong \cG(\Omega)$. We have $F^*(\iota u) = \iota^s(u)$ and $F^*([\sigma u]) = F^*([\iota u)] = [\iota^s u] = [\sigma^s u]$. Because $(\Lsk_{\pd_i} \vec\psi_k)_k \sim 0$, it results that $[F^*(\D_i R)] = [(\pd_i ( R(\vec\psi_k)))_k] = [\D_i^s ( F^*(R))]$. Finally, we see that 	$(F^*)^{-1} ([\iota^s u]) = [\iota u]$ and $(F^*)^{-1} ( D_i^s [(u_k)_k]) = [\D_i ( (F^*)^{-1} ( (u_k)_k ) )]$ holds by construction.
\end{proof}

The above shows that as soon as the tests are fixed one can use their structure for further modifications of the basic space. In fact, the basic space can incorporate the testing procedure to various degrees (cf.~\cite[Section 9]{found} for a discussion about `separating the basic definition from testing'). This is best exemplified by the simplified algebra $\cG^s$, where the sequence of smoothing kernels used for the test is actually incorporated into the embedding $\iota^s$ and is not visible in the definition of moderateness and negligibility anymore. Similarly, the full algebra $\cG^e$ can be obtained from $\SbasEploc$ after a coordinate transformation of the basic space: by the structure of its tests, smoothness in the first slot of elements of the basic space is then not needed for testing anymore, and the basic space can be enlarged accordingly (cf.~\cite{ColElem}). In a similar manner, the diffeomorphism invariant full algebras $\cG^d$ and $\hat\cG$ can be recovered by imposing their respective tests on $\basEploc\Omega$. 

\section{Conclusion}

We have placed the construction of classical Colombeau algebras into a unifying hierarchy based on the idea of taking smoothing operators as a \emph{variable} used in the smoothing process of distributions. This leads to the general basic space $\basE\Omega$ which explains the formal relationship between various basic spaces of classical Colombeau algebras, and, together with the locality properties discussed in Section \ref{sec_subalg}, their sheaf properties and the nature of the derivative $\tLE_X$. The definition of our general basic space is not only very natural and hence applicable to many situations, but it will also serve as a basis for further study of the quotient construction, and marks the beginning of a structure theory for Colombeau algebras. Furthermore, it will provide the grounds for the introduction of a full algebra of nonlinear generalized tensor fields possessing a covariant derivative. 

\paragraph{Acknowledgements.} The author graciously thanks Michael Grosser for helpful discussions. This work was supported by project P23714 of the Austrian Science Fund (FWF).

\end{document}